\numberwithin{equation}{section}
\theoremstyle{plain}
\newtheorem{Definition}{Definition}[section]
\newtheorem{definition}[Definition]{Definition}
\newtheorem{theorem}[Definition]{Theorem}
\newtheorem{lemma}[Definition]{Lemma}
\newtheorem{corollary}[Definition]{Corollary}
\newtheorem{example}[Definition]{Example}
\newcommand{\mbz}{\mathbb{Z}}
\newcommand{\mbr}{\mathbb{R}}
\newcommand{\ms}{\mathcal{M}(S)}
\begin{document}
\title{\Large \bf On the Jacobson radical and semisimplicity of a semiring}
\author{A. K. Bhuniya and Puja Sarkar}
\date{}
\maketitle

\begin{center}
Department of Mathematics, Visva-Bharati, \\ Santiniketan-731235, India. \\
anjankbhuniya@gmail.com; puja.vb.math@gmail.com
\end{center}

\begin{abstract}
Based on the minimal and simple representations, we introduce two Jacobson-type Hoehnke radicals, m-radical and s-radical, of a semiring $S$. Every minimal (simple) $S$-semimodule is a quotient of $S$ by a regular right congruence (maximal) $\mu$ on $S$ such that $[0]_\mu$ is a maximal $\mu$-saturated right ideal in $S$. Thus the m(s)-radical becomes an intersection of some regular congruences. Finally, every semisimple semiring is characterized as a subdirect product of primitive semirings; and every s-primitive semiring is represented as a 1-fold transitive subsemiring of the semiring of all endomorphisms on a semimodule over a division semiring.
\end{abstract}
\noindent
2000 Mathematics Subject Classification: 16Y60; 16N99; 16D99.
\\ \emph{Key words and phrases}: Semiring; idempotent; Jacobson radical; Hoehnke radical; primitive.

\section{Introduction}
A semiring is an algebraic structure satisfying all the axioms of a ring, but one that every element has an additive inverse. The absence of additive inverses forces a semiring to deviate radically from behaving like a ring. For example, ideals are not in bijection with the congruences on a semiring. Further, the presence of the additively idempotent semirings makes the class of the semirings aberrant. Now semirings have become a part of mainstream mathematics for their importance in theoretical computer science \cite{Schutzenberger} and automata theory \cite{Conway}; and for the surprising `characteristic one analogy' of the usual algebra over fields \cite{CC2009, CC2010, CCM, Deitmar2008}; and for the role of additively idempotent semirings in tropical mathematics \cite{ABG, Castella2010, Gathmann, IMS, IR2010, Litviniv05-07}.

There are many papers on the concrete radicals \cite{Bourne1951, HW1997, HW2001, Ilzuka1959, KN2014, Latorre1967,  MT} as well as on abstract theory of radicals \cite{Morak, OJ1983, ON1989, WW1992, WW2003a, WW2003b}  on semirings. Bourne \cite{Bourne1951} characterized Jacobson radical of a semiring $S$ internally as the sum of all right semiregular ideals in $S$. Ilzuka \cite{Ilzuka1959} defined the same in terms of the irreducible representations of a semiring. Also, he introduced quasi-regularity in semirings and characterized the Jacobson radical of a semiring $S$ as the intersection of all strongly closed primitive ideals in $S$. If $S$ is an additively idempotent semiring, then every right ideal in $S$ is semiregular and quasi-regular. Thus according to both Bourne and Ilzuka, every additively idempotent semiring $S$ is a radical semiring, i.e., $J(S)=S$. In a recent paper, \cite{KN2014} Katsov and Nam introduced and studied an external Kurosh-Amitsur Jacobson radical theory for a semiring $S$ based on representations of $S$. In search of a suitable analogue of the Jacobson radical that will work for additively idempotent semirings also, they introduced another radical $J_s(S)$ of a semiring $S$ in terms of the simple representations of $S$ and characterized finite additively idempotent $J_s$-semisimple semirings. Mai and Tuyen \cite{MT} continued to study this radical $J_s(S)$ and $J_s$-semisimplicity within the class of zerosumfree semirings.

Thus even though the ideals are the fundamental objects of the radical theory of rings, they can not fulfil the same role in the radical theory of semirings. At the same time, replacing ideals with the more general notion of congruences on semirings exhibits many excellent properties and several analogies with classical results on the rings \cite{BE2017, JM2018JA, JM2018SM}. Here we define and study two Jacobson type radicals of a semiring $S$ as congruences on $S$, and also show that these radicals act for additively idempotent semirings.

In our approach, the annihilator of an $S$-semimodule $M$ is considered as a congruence on $S$. Similarly to the semirings, subsemimodules are not in bijection with the congruences on a semimodule; which produces three variants of `irreducibility' of semimodules -- minimal semimodules, elementary semimodules, and simple semimodules \cite{Chen}, \cite{IRS2011}. Based on the classes of minimal semimodules and simple semimodules, two different Jacobson type Hoehnke radicals of a semiring $S$ are defined in terms of annihilator of $S$-semimodules, which are called m-radical and s-radical of $S$ respectively. The absence of additive inverse makes it difficult to define the regularity of an ideal in a semiring analogously to the rings, whereas in the case of a congruence, it can be defined naturally. An $S$-semimodule $M$ is (simple) minimal if and only if $M \simeq S/ \mu$ where $\mu$ is a (maximal) regular right congruence on $S$ such that $[0]_\mu$ is a maximal $\mu$-saturated right ideal in $S$. Thus to make the quotient semimodule $S/ \mu$ `irreducible', the maximality of the regular right congruence $\mu$ is not sufficient; indeed, it is shared with its zero class $[0]_\mu$. Such a sharing among the right congruences and the right ideals happens in most of the theorems of this article. Also, considering radical as a congruence makes it possible to present the $J$-semisimple semirings as a subdirect product of primitive semirings.

This paper is organized as follows. Section 2 briefly recaps necessary definitions and associated facts on semirings and semimodules. Section 3  introduces the Jacobson m-radical and s-radical externally. Each of these radicals is Hoehnke radical; and can be expressed as an intersection of a suitable class of regular right congruences. We conclude this section with the characterization of the radicals of the product semirings. Section 4 introduces m-primitive and s-primitive semirings and characterizes Jacobson semisimple semirings as a subdirect product of primitive semirings. Every commutative (s)m-primitive semiring is a (congruence simple) semifield. Since every congruence simple semifield $S$ with $|S| > 2$ is a field, every commutative s-semisimple semiring is a subdirect product of a family of semirings, each of which is either the 2-element Boolean algebra or a field. Finally, every s-primitive semiring is represented as a 1-fold transitive subsemiring of the semiring of all endomorphisms on a semimodule over a division semiring.

\section{Preliminaries}
In this paper by a \emph{semiring} $(S, +, \cdot)$ we mean a nonempty set $S$ with two binary operations `+' and `$\cdot$' satisfying:
\begin{itemize}
\item
$(S, +)$ is a commutative monoid with identity element 0;
\item
$(S, \cdot)$ is a semigroup;
\item
$a(b+c)=ab+ac$ and $(a+b)c=ac+bc$ for all $a, b, c \in S$.
\end{itemize}
Following Golan \cite{Golan1999}, in the literature, it is generally assumed that a semiring has both an additive identity and a multiplicative identity and the additive identity 0 is absorbing, i.e., $0s=s0=0$ for all $s \in S$. However, we follow the convention of Hebisch and Weinert \cite{HW}. Also, there are many articles on semirings where existence of unity is not assumed \cite{BM2010}, \cite{BM2015}, \cite{Il'in2021}, \cite{IRS2011}, \cite{SB2011}, \cite{WJK}. In this paper, we only assume that every semiring has an additive identity element 0 which is absorbing. If a semiring $S$ with multiplicative identity is such that every nonzero element has a multiplicative inverse, then $S$ is called a \emph{division semiring}. A commutative division semiring is called a \emph{semifield}. A semiring $S$ is said to be an \emph{additively idempotent semiring} if $a + a =a$ for all $a \in S$. Both the two element Boolean algebra $\mathbb{B}$ and the max-plus algebra $\mathbb{R}_{max}$ are additively idempotent semifields.

A \emph{right congruence} $\rho$ on a semiring $S$ is an equivalence relation on $S$ such that for all $a, b, c \in S$, if $(a, b) \in \rho$ then we have $(a+c, b+c) \in \rho$ and $(ac, bc) \in \rho$. A \emph{left congruence} on a semiring is defined similarly, and a \emph{congruence} is both a left and a right congruence. Similar to the rings, the ideals and homomorphisms of semirings are defined in the usual way. Also, it is assumed that every semiring homomorphism $\phi: S_1 \longrightarrow S_2$ satisfies $\phi(0_1)=0_2$. A bijective homomorphism is called an \emph{isomorphism}; if there is an isomorphism $\phi: S_1 \longrightarrow S_2$, then the semirings $S_1$ and $S_2$ are said to be \emph{isomorphic} which is denoted by $S_1 \simeq S_2$. The kernel of a semiring homomorphism $\phi : S_1 \longrightarrow S_2$ is defined by $\ker \phi = \{(a, b) \in S_1 \times S_2 \mid \phi(a)=\phi(b)\}$. Then $\ker \phi$ is a congruence on $S_1$ and $S_1/\ker \phi \simeq \phi(S_1)$.

We denote $\Delta_S=\{(s, s) \mid s \in S\}$ and $\nabla_S=S \times S$. A semiring $S$ is said to be \emph{congruence-simple} if it has no congruences other than $\Delta_S$ and $\nabla_S$. If $\rho$ is a (left, right) congruence on $S$, then for every $s \in S$, the $\rho$-class containing $s$ is denoted by $[s]_\rho$ or briefly by $[s]$ if there is no scope of ambiguity.

For every (left, right) ideal $I$ in $S$, the Bourne (left, right) congruence $\sigma_I$ on $S$ is defined by: for $x, y \in S$, $x \sigma_I y$ if $x + i_1 = y + i_2$ for some $i_1, i_2 \in I$. The Bourne (left, right) congruence $\sigma_I$ is the smallest (left, right) congruence on $S$ such that $I \subseteq [0]_{\sigma_I}$. In general $[0]_{\sigma_I} \neq I$. In fact, $[0]_{\sigma_I} = \{ x \in S \ | \ x + i = i \; \textrm{for some} \; i \in I\}$ which is denoted by $\overline{I}$ and is called the \emph{saturation of $I$}. A (left, right) ideal $I$ is called \emph{saturated} if $\overline{I} = I$ \cite{Lescot-II}.

A \emph{right $S$-semimodule} is a commutative monoid $(M, +, 0_{M})$ equipped with a right action $M \times S \longrightarrow M$ that satisfies for all $m, m_1, m_2 \in M$ and $s, s_1, s_2 \in S$:
\begin{itemize}
\item
$(m_1+m_2)r=m_1r+m_2r$;
\item
$m(r_1+r_2)=mr_1+mr_2$;
\item
$m(r_1r_2)=(mr_1)r_2$;
\item
$m0=0_M=0_Mr$.
\end{itemize}
Unless stated otherwise, by an \emph{$S$-semimodule} $M$, we mean a right $S$-semimodule.

Subsemimodules and congruences on a semimodule are defined in the usual way. The set of all subsemimodules and congruences on an $S$-semimodule $M$ will be denoted by $\mathcal{S}_S(M)$ and $\mathcal{C}_S(M)$ respectively.

Following Chen et. al. \cite{Chen} we define:
\begin{definition}
Let $M$ be an $S$-semimodule such that $MS \neq 0 $. Then $M$ is called,
\begin{enumerate}
\item[(i)]
minimal if $M$ has no subsemimodules other than $(0)$and $M$ ;
\item[(ii)]
simple if it is minimal and the only congruences on $M$ are $\Delta_M$ and $\nabla_M$ where $\Delta_M$ is the equality relation on $M$ and $\nabla_M=M \times M$.
\end{enumerate}
\end{definition}
In \cite{IRS2011}, simple semimodules have been termed as irreducible semimodules. We denote the class of all minimal and simple $S$-semimodules by $\mathcal{M}(S)$ and  $\mathcal{S}(S)$, respectively.

It was remarked in \cite{IRS2011} that in a minimal semimodule $M$, every $m \neq 0$ generates $M$. However, for the sake of completion, here we include proof.
\begin{lemma}     \label{minimal}
A nonzero $S$-semimodule $M$ is minimal if and only if $M = mS$ for all $m (\neq 0) \in M$.
\end{lemma}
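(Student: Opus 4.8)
The plan is to verify both implications by exhibiting the relevant subsemimodules and invoking minimality. For the forward direction, suppose $M$ is minimal and fix $m \in M$ with $m \neq 0$. First I would check that $mS = \{ ms \mid s \in S \}$ is a subsemimodule of $M$: it is closed under addition since $ms_1 + ms_2 = m(s_1 + s_2)$, closed under the $S$-action since $(ms)s' = m(ss')$, and it contains $0_M$ because $m0 = 0_M$. By minimality, $mS$ is either $(0)$ or $M$, so it remains to exclude $mS = (0)$.

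The key step is this exclusion, and it is the only place where the hypothesis $MS \neq 0$ (built into the definition of minimality) is used. Consider the ``annihilator'' set $N = \{ n \in M \mid nS = (0) \}$. A routine check shows $N$ is a subsemimodule: for $n_1, n_2 \in N$ one has $(n_1 + n_2)s = n_1 s + n_2 s = 0_M$; for $n \in N$ and $s' \in S$ one has $(ns')s = n(s's) = 0_M$; and $0_M \in N$. By minimality $N = (0)$ or $N = M$, but $N = M$ would give $MS = (0)$, contrary to $M$ being minimal. Hence $N = (0)$, so $m \neq 0$ forces $mS \neq (0)$, and therefore $mS = M$, as desired.

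For the converse, assume $M = mS$ for every nonzero $m \in M$; since $M$ itself is nonzero we may pick such an $m$, and then $MS \supseteq mS = M \neq (0)$, so the condition $MS \neq 0$ holds. Now let $N$ be any subsemimodule of $M$ with $N \neq (0)$; choose $n \in N$ with $n \neq 0$. Because $N$ is closed under the $S$-action, $M = nS \subseteq N \subseteq M$, whence $N = M$. Thus the only subsemimodules of $M$ are $(0)$ and $M$, and $M$ is minimal.

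I expect no serious obstacle here; the one point requiring care is the forward direction's exclusion of $mS = (0)$, which is precisely why the definition of ``minimal'' carries the side condition $MS \neq 0$ — without it the trivial action on a one-dimensional monoid would be a spurious counterexample.
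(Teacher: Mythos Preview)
Your proof is correct and follows essentially the same approach as the paper: both arguments use the annihilator subsemimodule $N=\{n\in M\mid nS=(0)\}$ together with $MS\neq 0$ to force $N=(0)$, and then apply minimality to $mS$. The only difference is that you spell out the routine verifications (that $mS$ and $N$ are subsemimodules, and the converse direction) which the paper leaves implicit or dismisses as trivial.
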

\begin{proof}
Let $M$ be minimal. Then $MS \neq 0$ implies that $N = \{ m \in M \mid mS = 0\}$ is a proper subsemimodule of $M$, and hence it must be equal to 0. So for every $m \neq 0$, $mS$ is a nonzero subsemimodule of $M$, and it follows that $mS = M$.

The converse is trivial.
\end{proof}

Let $R$ be a ring and $M$ be a minimal semimodule over $R$. If $m (\neq 0) \in M$, then there exists $s \in S$ such that $m=ms$. Consider $m(-s) \in M$. Then $ms+m(-s)=m(s-s)=m0=0$ implies that every element of $M$ has an additive inverse. Hence $(M, +)$ is a group, and so $M$ is a module over $R$. Thus over a ring $R$, minimal semimodules and simple semimodules coincide.

The \emph{annihilator} of an $S$-semimodule $M$ is defined by
\[
ann_S(M) = \{ (s_1, s_2) \in S \times S \ | \ ms_1 = ms_2 \; \textrm{for all} \; m \in M\}.
\]
Then $ann_S(M)$ is a congruence on $S$.

The right action of $S$ on $M$ induces an $S$-endomorphism $\psi_s : M \longrightarrow M$ ; $m \mapsto ms$ on $M$ for each $s \in S$. Denote the semiring of all $S$-endomorphisms on $M$ by $End_S(M)$. Thus we get a representation $\psi : S \longrightarrow End_S(M)$; $s \mapsto \psi_s$ of $S$. In this case, $M$ is called a representation module of $S$. Note that $\ker \psi = \{ (s_1, s_2) \in S \times S \ | \ \psi(s_1) = \psi(s_2)\} = ann_S(M)$.

An $S$-semimodule $M$ is said to be \emph{faithful} if $ann_S(M) = \Delta_S$, equivalently $\ker \psi = \Delta_S$.

Let $\rho$ be a right congruence on $S$. Define $S/ \rho \times S \longrightarrow S/\rho$ by $([a]_\rho, s) \mapsto [as]_\rho$. Then $S/\rho$ is a right $S$-semimodule. Also, if $M$ is a right $S$-semimodule then for every congruence $\rho$ on $S$ with $\rho \subseteq ann_S(M)$, the scalar multiplication $m[s]_\rho = ms$ makes $M$ an $S/\rho$-semimodule.

In the sequel, we will have several occasions to use the following result, which can be proved easily, and so we omit the proof.
\begin{lemma}                \label{Lemma 2.1}
Let $S$ be a semiring and $\rho$ be a congruence on $S$.
\begin{enumerate}
\item
If $M$ is an $S/\rho$-semimodule, then $M$ becomes an $S$-semimodule under the scalar multiplication $ms =
m [s]$.

Moreover, $\rho \subseteq ann_S(M)$.
\item
Let $M$ be an $S$-semimodule and $\rho \subseteq ann_S(M)$. Then
  \begin{enumerate}
  \item
  $ann_{S/\rho}(M) = ann_S(M)/\rho$.
  \item
  $\mathcal{S}_S(M) = \mathcal{S}_{S/\rho}(M)$.
  \item
  $\mathcal{C}_S(M) = \mathcal{C}_{S/\rho}(M)$.
  \end{enumerate}
\end{enumerate}
\end{lemma}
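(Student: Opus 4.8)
The plan is to verify everything directly from the definitions; there is no deep idea here, only some bookkeeping with representatives of $\rho$-classes. For part (1), given an $S/\rho$-semimodule $M$ I would \emph{define} $ms := m[s]_\rho$ and check the four semimodule axioms one at a time. Each reduces to the corresponding $S/\rho$-axiom together with the fact that $s \mapsto [s]_\rho$ is a semiring homomorphism: $m(s_1+s_2) = m[s_1+s_2]_\rho = m([s_1]_\rho+[s_2]_\rho) = m[s_1]_\rho+m[s_2]_\rho = ms_1+ms_2$, $m(s_1 s_2) = m([s_1]_\rho[s_2]_\rho) = (m[s_1]_\rho)[s_2]_\rho = (ms_1)s_2$, $(m_1+m_2)s = (m_1+m_2)[s]_\rho = m_1[s]_\rho+m_2[s]_\rho$, and $m0_S = m[0_S]_\rho = m\,0_{S/\rho} = 0_M = 0_M[s]_\rho$. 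For the ``moreover'' part, if $(s_1,s_2)\in\rho$ then $[s_1]_\rho=[s_2]_\rho$, hence $ms_1 = m[s_1]_\rho = m[s_2]_\rho = ms_2$ for every $m\in M$, i.e. $(s_1,s_2)\in ann_S(M)$.

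For part (2) I would first invoke the remark made just before the statement: when $\rho\subseteq ann_S(M)$, the rule $m[s]_\rho := ms$ is well defined (if $[s_1]_\rho=[s_2]_\rho$ then $(s_1,s_2)\in\rho\subseteq ann_S(M)$, so $ms_1=ms_2$) and turns $M$ into an $S/\rho$-semimodule by the same axiom checks as above read in reverse. The key observation for (b) and (c) is that, with this scalar multiplication, the action of $[s]_\rho$ on $M$ is literally the action of $s$; hence ``closed under the $S$-action'' and ``closed under the $S/\rho$-action'' are the same condition on a submonoid of $M$, and likewise ``$S$-compatible'' and ``$S/\rho$-compatible'' are the same condition on an equivalence relation on $M$. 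This yields (b) $\mathcal{S}_S(M)=\mathcal{S}_{S/\rho}(M)$ and (c) $\mathcal{C}_S(M)=\mathcal{C}_{S/\rho}(M)$ at once.

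For (a) the only point needing a little care is that $ann_S(M)/\rho$ is even a well-defined relation on $S/\rho$: since $\rho\subseteq ann_S(M)$ and $\rho$ is a congruence, the image of $ann_S(M)$ under the quotient map $S\times S\to (S/\rho)\times(S/\rho)$ is again a congruence, and $([s_1]_\rho,[s_2]_\rho)\in ann_S(M)/\rho$ holds precisely when $(s_1,s_2)\in ann_S(M)$ (the nontrivial direction: for any other representatives $s_i'$ one has $(s_i,s_i')\in\rho\subseteq ann_S(M)$, so transitivity of $ann_S(M)$ gives $(s_1,s_2)\in ann_S(M)$). Granting this, I would finish with the chain $([s_1]_\rho,[s_2]_\rho)\in ann_{S/\rho}(M) \iff m[s_1]_\rho=m[s_2]_\rho \text{ for all }m\in M \iff ms_1=ms_2 \text{ for all }m\in M \iff (s_1,s_2)\in ann_S(M) \iff ([s_1]_\rho,[s_2]_\rho)\in ann_S(M)/\rho$. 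The ``main obstacle'' is therefore nothing more than this representative-independence bookkeeping in (a); everything else is a mechanical transcription of the semimodule axioms through the homomorphism $S\to S/\rho$.
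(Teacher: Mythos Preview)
Your proof is correct; it is exactly the routine verification one would expect. The paper itself omits the proof entirely (``which can be proved easily, and so we omit the proof''), so there is nothing to compare against beyond noting that your direct check via the quotient homomorphism $s\mapsto[s]_\rho$ is the intended argument.
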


The reader is referred to \cite{Golan1999} for the undefined terms and notions concerning semirings and semimodules over semirings.

\section{Jacobson radical of a semiring}
In this section, we define Jacobson m-radical and s-radical of a semiring based on the classes of minimal semimodules and simple semimodules, respectively, in a way that is familiar from the radical theory of rings. Also, we find two suitable classes of regular right congruences on a semiring $S$ to characterize these two radicals internally without any reference to the semimodules over $S$.
\begin{definition}
Let $S$ be a semiring. We define
\begin{enumerate}
\item[(i)]
$m$-radical of $S$ by $rad_m(S) = \cap_{M \in \mathcal{M}(S)} ann_S(M)$;
\item[(ii)]
$s$-radical of $S$ by $rad_s(S) = \cap_{M \in \mathcal{S}(S)} ann_S(M)$.
\end{enumerate}

If there are no minimal semimodules over $S$, then we define $rad_m(S)=\nabla_S$. Similarly, we define $rad_s(S)=\nabla_S$ if there is no simple $S$-semimodules.

A semiring $S$ is said to be $m$-semisimple if $rad_m(S)=\Delta_S$; and $s$-semisimple if $rad_m(S)=\Delta_S$.
\end{definition}

Recall that a right module $M$ over a ring $R$ is called irreducible if $MR \neq \{0\}$ and $M$ has no submodules other than $\{0\}$ and $M$. The Jacobson radical of a ring $R$ is defined by $J(R)=\cap Ann_R(M)$ where the intersection runs over all irreducible $R$-modules and $Ann_R(M)=\{r \in R \mid mr=0 \ \mbox{for all} \ m \in M\}$ \cite{Herstein}. Since every ideal in a ring is saturated, it follows that $ann_R(M)=\sigma_{Ann_R(M)}$ and $[0]_{ann_R(M)}=Ann_R(M)$ for every right $R$-module $M$.
\begin{example}
Let $R$ be a ring (possibly without 1). If $M$ is a minimal semimodule over $R$, then, by Lemma \ref{minimal}, $(M, +)$ is a group, and so is a module over $R$. An $R$-semimodule $M$ is minimal if and only if it is simple; equivalently, $M$ is an irreducible module over $R$. Hence it follows that $rad_m(R)=rad_s(R)$. Also we have $rad_s(R)=\cap_{M \in \mathcal{S}(R)}ann_R(M)=\cap_{M \in \mathcal{S}(R)}\sigma_{Ann_R(M)} = \sigma_{\cap_{M \in \mathcal{S}(R)}Ann_R(M)}=\sigma_{J(R)}$  and $[0]_{rad_s(R)}=J(R)$.
\end{example}

An assignment from the collection of all semirings to the collection of all congruences over semirings $S \longmapsto r(S)$ is said to be a \emph{Hoehnke radical} if for every onto homomorphism $f: S \rightarrow f(S)$,
\begin{enumerate}
\item[(i)]
$f(r(S)) \subseteq r(f(S))$ where $f(r(S)) = \{ (f(a) , f(b))\mid (a,b) \in r(S) \}$;
\item[(ii)]
$r(S/r(S)) = \Delta_{(S/r(S))}$.
\end{enumerate}

We show that both the m-radical and the s-radical are Hoehnke radicals on $S$.
\begin{theorem}
Let $S$ be a semiring. Then both the assignments $S \longmapsto rad_m(S)$ and $S \longmapsto rad_s(S)$ are Hoehnke radicals on $S$.
\end{theorem}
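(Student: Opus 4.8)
The plan is to verify the two Hoehnke-radical axioms for each of the assignments $S \longmapsto rad_m(S)$ and $S \longmapsto rad_s(S)$; since $\mathcal{S}(S) \subseteq \mathcal{M}(S)$ and the two arguments are formally identical, I would write the proof for $rad_m$ and remark that the same reasoning applies verbatim to $rad_s$. Fix an onto homomorphism $f : S \to \bar S$, where $\bar S = f(S)$, and let $\kappa = \ker f$, so $\bar S \simeq S/\kappa$. The central observation, used repeatedly, is the correspondence supplied by Lemma \ref{Lemma 2.1}: an $\bar S$-semimodule $M$ is the ``same thing'' as an $S$-semimodule with $\kappa \subseteq ann_S(M)$, and under this identification $ann_{\bar S}(M)$ corresponds to $ann_S(M)/\kappa$, while minimality of $M$ is detected identically over $S$ and over $\bar S$ (parts 2(b) and the definition, since $MS = M\bar S$). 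Thus pulling back along $f$ gives a bijection between $\mathcal{M}(\bar S)$ and $\{\, M \in \mathcal{M}(S) : \kappa \subseteq ann_S(M)\,\}$.

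For axiom (i), I take $(a,b) \in rad_m(S)$ and must show $(f(a),f(b)) \in rad_m(\bar S) = \cap_{M \in \mathcal{M}(\bar S)} ann_{\bar S}(M)$. If $\mathcal{M}(\bar S) = \varnothing$ then $rad_m(\bar S) = \nabla_{\bar S}$ and there is nothing to prove; otherwise let $M \in \mathcal{M}(\bar S)$. Viewing $M$ as an $S$-semimodule via $f$, we have $M \in \mathcal{M}(S)$, hence $rad_m(S) \subseteq ann_S(M)$, so $(a,b) \in ann_S(M)$, i.e. $ma = mb$ for all $m \in M$. But the $\bar S$-action is $m f(s) = ms$, so $m f(a) = m f(b)$ for all $m$, which says $(f(a),f(b)) \in ann_{\bar S}(M)$. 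Since $M$ was arbitrary, $(f(a),f(b)) \in rad_m(\bar S)$, as required; equivalently $f(rad_m(S)) \subseteq rad_m(\bar S)$.

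For axiom (ii), specialize to $\bar S = S/r$ with $r = rad_m(S)$ and $f$ the quotient map, so $\kappa = r$ (using that $rad_m(S)$ is a congruence, as each $ann_S(M)$ is and congruences are closed under intersection). I must show $rad_m(S/r) = \Delta_{S/r}$. Suppose $([a],[b]) \in rad_m(S/r)$. Again if there are no minimal $S/r$-semimodules this forces $rad_m(S/r) = \nabla_{S/r}$, and I would handle this by first noting that the assignment is non-trivial only when $rad_m(S) \ne \nabla_S$ — if $rad_m(S) = \nabla_S$ then $S/r$ is the one-element semiring, $\Delta_{S/r} = \nabla_{S/r}$, and the claim is immediate; so assume $rad_m(S) \ne \nabla_S$, i.e. $\mathcal{M}(S) \ne \varnothing$. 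Now take any $M \in \mathcal{M}(S)$. Since $r = rad_m(S) \subseteq ann_S(M)$, Lemma \ref{Lemma 2.1}(1) makes $M$ an $S/r$-semimodule, and by part 2(b),(c) it is still minimal, so $([a],[b]) \in rad_m(S/r) \subseteq ann_{S/r}(M)$; unwinding the action $m[s] = ms$ gives $ma = mb$ for all $m \in M$, i.e. $(a,b) \in ann_S(M)$. As $M$ ranges over $\mathcal{M}(S)$ this yields $(a,b) \in \cap ann_S(M) = r$, hence $[a] = [b]$. Therefore $rad_m(S/r) = \Delta_{S/r}$.

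The argument is essentially bookkeeping once Lemma \ref{Lemma 2.1} is in hand; the only genuinely delicate point is the handling of the degenerate cases built into the definition (the conventions $rad_m(S) = \nabla_S$ when $\mathcal{M}(S) = \varnothing$, and likewise for $rad_s$), which must be checked so that axiom (ii) is not vacuously violated — in particular one needs that $\mathcal{M}(S) = \varnothing$ forces $S/rad_m(S)$ to collapse so that $\Delta = \nabla$ there. I would make sure to state explicitly that the class $\mathcal{M}(S/r)$ is in bijection with $\{M \in \mathcal{M}(S) : r \subseteq ann_S(M)\}$, which for $r = rad_m(S)$ is all of $\mathcal{M}(S)$; this is what makes the intersection defining $rad_m(S/r)$ collapse to $\Delta_{S/r}$. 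For $rad_s$ one additionally invokes Lemma \ref{Lemma 2.1}(2)(c) to see that the congruence-simplicity half of ``simple'' is also preserved under the identification, but otherwise the proof is word-for-word the same.
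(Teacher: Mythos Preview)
Your proposal is correct and follows essentially the same approach as the paper: both arguments hinge on Lemma~\ref{Lemma 2.1} to identify $\mathcal{M}(\bar S)$ with the minimal $S$-semimodules whose annihilator contains $\ker f$, and then read off the two Hoehnke axioms from that correspondence. Your treatment is in fact slightly more careful than the paper's, since you explicitly dispose of the degenerate case $\mathcal{M}(S)=\varnothing$ (where $rad_m(S)=\nabla_S$ and $S/rad_m(S)$ collapses to a point), a point the paper passes over in silence.
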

\begin{proof}
We prove the result for the m-radical and the proof for the s-radical is similar. Let $f: S \rightarrow f(S)$ be an onto homomorphism. Then the first isomorphism theorem for semirings implies $S/\ker f \cong f(S)$.

Let $M$ be a minimal $f(S)$-semimodule. Then, by Lemma \ref{Lemma 2.1}, $M$ becomes a minimal $S$-semimodule under the scalar multiplication $ms = mf(s)$. Hence it follows that
\begin{align*}
f(rad_m(S)) &= \{ (f(a) , f(b)) \mid (a , b) \in rad_m(S) \}\\
            &= \{ (f(a) , f(b)) \mid (a , b) \in ann_S(M)\mbox{ for all } M \in \mathcal{M}(S)\}\\
            & \subseteq \{ (f(a) , f(b)) \mid (a , b) \in ann_S(M)\mbox{ for all } M \in \mathcal{M}(f(S)) \} \\
            &= \{ (f(a) , f(b)) \mid (f(a) , f(b)) \in ann_{f(S)}(M)\mbox{ for all } M \in \mathcal{M}(f(S)) \}\\
            &= rad_m(f(S)).
\end{align*}
Also, by Lemma \ref{Lemma 2.1}, we have
\begin{align*}
rad_m( S/rad_m(S) ) &= \cap_{M \in \mathcal{M}(S/rad_m(S))} ann_{S/rad_m(S)}(M) \\
                    &= \cap_{M \in \mathcal{M}(S/rad_m(S))} ann_S (M)/rad_m(S)  \\
                    &= \cap_{M \in \mathcal{M}(S)} ann_S (M) /rad_m(S) \\
                    &= \Delta_{S/rad_m(S)}.
\end{align*}
Therefore the assignment $rad_m(S)$ is a Hoehnke radical.
\end{proof}
\begin{example}
Let $G$ be a finite group and $\mathbb{B}G$ its group semiring over the two-element Boolean algebra $\mathbb{B}$. Let $M$ be a minimal $\mathbb{B}G$-semimodule. Then, by Theorem 3.3 \cite{Chen}, $M$ is isomorphic to the trivial semimodule $\mathbb{B}$. Thus $ann_{\mathbb{B}G}(M) = \nabla_{\mathbb{B}G}$. Therefore $rad_m(\mathbb{B}G) = \nabla_{\mathbb{B}G}$.

Also, a $\mathbb{B}G$-semimodule $M$ is minimal if and only if it is simple \cite{Chen}. Hence $rad_s(\mathbb{B}G) = rad_m(\mathbb{B}G) = \nabla_{\mathbb{B}G}$.
\end{example}

A right congruence $\mu$ on $S$ is said to be a \emph{regular right congruence} if there exists $e \in S$ such that $(es, s) \in \mu$ for every $s \in S$.

If $\rho$ is a regular right congruence on $S$, then $M=S/\rho$ is a right $S$-semimodule such that $MS \neq 0$. Suppose that $N$ is a subsemimodule of $M$. Then $I(N)=\{s \in S \mid [s]_\rho \in N\}$ is a right ideal in $S$ that satisfies the property: for every $s \in S$ and $i \in I$, $(s, i) \in \rho$ implies that $s \in I$. Conversely, for a right ideal $I$ of $S$ that satisfies the above property induces a subsemimodule $N(I)=\{[s]_\rho \mid s \in I\}$ of the right $S$-semimodule $S/\rho$.

Let $I$ be a (left, right) ideal of a semiring $S$ and $\mu$ be a (left, right) congruence relation on $S$. Then $I$ is said to be a \emph{$\mu$-saturated (left, right) ideal} of $S$ if for every $s \in S$ and $i \in I$, $(s, i) \in \mu$ implies that $s \in I$.

An ideal $I$ is $\mu$-saturated if and only if $I=\cup_{a \in I}[a]_\mu$. Also, $I$ is a saturated ideal if and only if it is $\sigma_I$-saturated. Thus the $\mu$-saturated ideals generalize the notion of saturated ideals.

The subsequent two theorems characterize the regular congruences $\mu$ on $S$ such that the quotient semimodule $S/ \mu$ is a minimal or a simple semimodule over $S$.

\begin{theorem}    \label{iff minimal}
Let $M$ be an $S$-semimodule. Then $M$ is minimal if and only if there exists a regular right congruence $\mu$ on $S$ such that $S/\mu \simeq M$ and $[0]_{\mu}$ is a maximal $\mu$-saturated right ideal in $S$.
\end{theorem}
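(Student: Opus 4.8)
The plan is to prove both directions by passing between minimal semimodules and quotients $S/\mu$ by regular right congruences. For the ``if'' direction, suppose $\mu$ is a regular right congruence with $[0]_\mu$ a maximal $\mu$-saturated right ideal and $S/\mu \simeq M$. Since $\mu$ is regular, there is $e \in S$ with $(es,s)\in\mu$ for all $s$, so $MS = (S/\mu)S \ni [es]_\mu = [s]_\mu$ for every $s$, hence $(S/\mu)S = S/\mu \neq 0$; this rules out the degenerate case. Now take any subsemimodule $N$ of $S/\mu$. By the discussion preceding the theorem, $I(N) = \{s \in S \mid [s]_\mu \in N\}$ is a $\mu$-saturated right ideal of $S$, and clearly $[0]_\mu \subseteq I(N)$ (the zero of $S/\mu$ lies in $N$). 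By maximality of $[0]_\mu$ among $\mu$-saturated right ideals, either $I(N) = [0]_\mu$, giving $N = (0)$, or $I(N) = S$, giving $N = S/\mu = M$. Hence $M$ has no proper nonzero subsemimodules, so $M$ is minimal.

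For the ``only if'' direction, let $M$ be minimal. By Lemma \ref{minimal}, $M = mS$ for every nonzero $m \in M$; fix such an $m$. The map $\psi_m : S \longrightarrow M$, $s \mapsto ms$, is then a surjective $S$-semimodule homomorphism, so $M \simeq S/\mu$ where $\mu = \ker \psi_m = \{(s_1,s_2) \mid ms_1 = ms_2\}$, a right congruence on $S$. To see $\mu$ is regular: since $m \in M = mS$, there is $e \in S$ with $m = me$, and then $mes = ms$ for all $s$, i.e. $(es,s) \in \mu$ for all $s$, so $e$ is the required element. It remains to show $[0]_\mu = \{s \in S \mid ms = 0_M\}$ is a maximal $\mu$-saturated right ideal. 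It is a right ideal, and it is $\mu$-saturated because if $(s,i) \in \mu$ with $i \in [0]_\mu$ then $ms = mi = 0_M$, so $s \in [0]_\mu$. For maximality, suppose $I \supsetneq [0]_\mu$ is a $\mu$-saturated right ideal; then $N(I) = \{[s]_\mu \mid s \in I\}$ is a subsemimodule of $S/\mu \simeq M$, and since $I \neq [0]_\mu$ and $I$ is $\mu$-saturated, $N(I) \neq (0)$. By minimality of $M$, $N(I) = S/\mu$, which together with $\mu$-saturatedness of $I$ forces $I = S$. Thus $[0]_\mu$ is maximal among proper $\mu$-saturated right ideals.

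The main point requiring care is the bookkeeping in the correspondence $N \leftrightarrow I(N)$ and $I \mapsto N(I)$ between subsemimodules of $S/\mu$ and $\mu$-saturated right ideals containing $[0]_\mu$: one must check that these are mutually inverse (in particular that $I$ $\mu$-saturated gives $I(N(I)) = I$, which is exactly where $\mu$-saturatedness is used) and order-preserving, so that ``no proper nonzero subsemimodule'' translates precisely into ``$[0]_\mu$ maximal $\mu$-saturated.'' Everything else — regularity of $\mu$, the identification of $[0]_\mu$, and the isomorphism $M \simeq S/\mu$ via the first isomorphism theorem for semimodules — is routine. I would also remark that this $\mu$ is unique up to the choice of generator $m$, though uniqueness is not needed for the statement.
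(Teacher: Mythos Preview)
Your proof is correct and follows essentially the same route as the paper's: in both directions you use the correspondence between subsemimodules of $S/\mu$ and $\mu$-saturated right ideals containing $[0]_\mu$, construct $\mu$ as the kernel of $s \mapsto ms$ for a nonzero $m$, and obtain regularity from $me = m$. The only differences are cosmetic (you handle the ``if'' direction first and spell out a few verifications, such as why $[0]_\mu$ is $\mu$-saturated, that the paper leaves implicit).
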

\begin{proof}
Let $M$ be minimal. Consider $m\in M, m\neq0$. Then, by Lemma \ref{minimal}, we have $mS = M$. Define $\phi : S \rightarrow M$ by $\phi(s) = ms$ for all $s\in S$. Then $\phi$ is an onto module homomorphism. Hence $\mu = \ker \phi = \{(s_1, s_2) \in S \times S \mid \; ms_1 = ms_2\}$ is a right congruence on $S$; and we have $M \simeq S/\mu$ as $S$-semimodules. Since $mS = M$, there exists an element $e \in S$ such that $me = m$, which implies that $mes = ms$ for all $s \in S$. Therefore $(es,s)\in\mu \mbox{ for all }s \in S$. Hence, $\mu$ is a regular right congruence on $S$. Let $I$ be a $\mu$-saturated right ideal in $S$ such that $[0]_{\mu} \subsetneq I$. Then $J = \{ [s]_{\mu} \mid \; s\in I\}$ is a subsemimodule of the $S$-semimodule $S/\mu$. Now $S/\mu$ is minimal and $J \neq \{[0]_{\mu}\}$ implies that $J=S/\mu$. Since $I$ is $\mu$-saturated, it follows that $I=S$. Thus $[0]_{\mu}$ is a maximal $\mu$-saturated right ideal.

Conversely assume that $\mu$ is a regular right congruence on $S$ such that $[0]_{\mu}$ is a maximal $\mu$-saturated right ideal in $S$. Then $S/\mu$ is a right $S$-semimodule. Since $\mu$ is regular, we have an element $e \in S$ such that $[e]_{\mu}s = [s]_{\mu}$ for all $s \in S$. Hence $[e]_{\mu}S = S/\mu$ and so $(S/\mu)S = S/\mu$. Now take any nonzero subsemimodule $N$ of $S/\mu$. Then $I(N) = \{ s \in S \mid \; [s]_{\mu} \in N\}$ is a $\mu$-saturated right ideal in $S$ containing $[0]_{\mu}$. Since $I(N) \neq [0]_{\mu}$, we have $I(N) = S$. Thus $N = S/\mu$ implies that $S/\mu$ is minimal.
\end{proof}

Every simple semimodule is congruence-simple. Therefore for every right congruence $\mu$ on $S$, $S/\mu$ is simple, implying that $\mu$ is maximal. The following theorem characterizes a regular right congruence $\mu$ on $S$ that makes the right $S$-semimodule $S/\mu$ simple.
\begin{theorem}              \label{iff simple}
Let $M$ be a $S$-semimodule. Then $M$ is simple if and only if there exists a maximal regular right congruence $\mu$ on $S$ such that $S/\mu \simeq M$ and $[0]_{\mu}$ is a maximal $\mu$-saturated right ideal in $S$.
\end{theorem}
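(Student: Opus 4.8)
The plan is to leverage Theorem~\ref{iff minimal} and reduce the statement to a single new ingredient: that, for a regular right congruence $\mu$ on $S$, maximality of $\mu$ (among all right congruences on $S$) is equivalent to congruence-simplicity of the $S$-semimodule $S/\mu$. The bridge is a correspondence theorem for semimodules: the right congruences $\nu$ on $S$ with $\mu\subseteq\nu$ are in inclusion-preserving bijection with the $S$-semimodule congruences on $S/\mu$, the correspondence sending $\nu$ to $\nu/\mu=\{([a]_\mu,[b]_\mu)\mid (a,b)\in\nu\}$ and a congruence $\theta$ on $S/\mu$ to its pullback $\{(a,b)\in S\times S\mid([a]_\mu,[b]_\mu)\in\theta\}$. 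First I would check this is well defined in both directions — the only point needing (routine) verification is that a pullback of $\theta\in\mathcal{C}_S(S/\mu)$ is genuinely a right congruence on $S$, i.e. compatible with both $+$ and the right $S$-action, and conversely that $\nu/\mu$ is well defined precisely because $\mu\subseteq\nu$. One then records the two endpoints: $\nu/\mu=\Delta_{S/\mu}$ exactly when $\nu=\mu$, and $\nu/\mu=\nabla_{S/\mu}$ exactly when $\nu=\nabla_S$.

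With this in hand the theorem is quick. For the forward direction, assume $M$ is simple. Then $M$ is in particular minimal, so Theorem~\ref{iff minimal} produces a regular right congruence $\mu$ with $S/\mu\simeq M$ and $[0]_\mu$ a maximal $\mu$-saturated right ideal. If $\mu\subseteq\nu$ for some right congruence $\nu$ on $S$, then $\nu/\mu$ is a congruence on $S/\mu\simeq M$; since $M$ is congruence-simple, $\nu/\mu\in\{\Delta_{S/\mu},\nabla_{S/\mu}\}$, whence $\nu\in\{\mu,\nabla_S\}$. Thus $\mu$ is a maximal regular right congruence, as required. For the converse, let $\mu$ be a maximal regular right congruence on $S$ with $[0]_\mu$ a maximal $\mu$-saturated right ideal. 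Theorem~\ref{iff minimal} already gives that $S/\mu$ is minimal and $S/\mu\simeq M$. To see $S/\mu$ is congruence-simple, take $\theta\in\mathcal{C}_S(S/\mu)$ and let $\nu$ be its pullback; then $\nu$ is a right congruence on $S$ with $\mu\subseteq\nu$, so maximality forces $\nu=\mu$ (hence $\theta=\Delta_{S/\mu}$) or $\nu=\nabla_S$ (hence $\theta=\nabla_{S/\mu}$). So $S/\mu$ is simple, and simplicity transfers along the isomorphism $S/\mu\simeq M$.

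I do not anticipate a genuine obstacle here: the content is entirely the correspondence theorem plus Theorem~\ref{iff minimal}, and the only place demanding attention is the bookkeeping in the correspondence — verifying that pullbacks of semimodule congruences respect both the monoid addition and the scalar action, and that the two trivial semimodule congruences correspond to $\mu$ and $\nabla_S$ respectively. Once that lemma-level fact is dispatched, both implications are immediate substitutions into Theorem~\ref{iff minimal}.
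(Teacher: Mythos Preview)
Your proposal is correct and follows essentially the same route as the paper: both invoke Theorem~\ref{iff minimal} for the minimality/ideal part and then use the correspondence $\nu\mapsto\nu/\mu$ between right congruences on $S$ containing $\mu$ and $S$-semimodule congruences on $S/\mu$ to handle congruence-simplicity. The one point worth making explicit in your converse (which the paper phrases as ``every maximal regular right congruence is also a maximal right congruence'') is that any right congruence $\nu\supseteq\mu$ inherits regularity from $\mu$, so maximality of $\mu$ among \emph{regular} right congruences already forces $\nu\in\{\mu,\nabla_S\}$.
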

\begin{proof}
Let $M$ be simple. Then, by Theorem \ref{iff minimal}, there exists a regular right congruence $\mu$ on $S$ such
that $S/\mu \simeq M$ and $[0]_{\mu}$ is a maximal $\mu$-saturated right ideal in $S$. Now for any regular right congruence $\phi$ on $S$ containing $\mu$ we have a right congruence $\phi/\mu = \{([a]_{\mu},[b]_{\mu}) \mid \; (a,b) \in \phi\}$ on the $S$-semimodule $S/\mu$. Since $S/\mu$ is simple, $\phi/\mu$ is either $\Delta_{S/\mu}$ or $\nabla_{S/\mu}$. Therefore $\phi$ is either $\mu$ or $\nabla_S$. Thus $\mu$ is a maximal regular right congruence on $S$.

Conversely, by Theorem \ref{iff minimal}, it follows that $S/\mu$ is a minimal $S$-semimodule for every maximal
regular right congruence $\mu$ on $S$ where $[0]_{\mu}$ is a maximal $\mu$-saturated right ideal in $S$. Since every maximal regular right congruence is also a maximal right congruence, $S/\mu$ is simple.
\end{proof}

Theorem \ref{iff minimal} and Theorem \ref{iff simple} motivate two further definitions. A regular right congruence $\mu$ on $S$ is said to be \emph{m-regular} if $[0]_{\mu}$ is a maximal $\mu$-saturated right ideal in $S$ and said to be \emph{s-regular} if it is a maximal regular right congruence such that $[0]_{\mu}$ is a maximal $\mu$-saturated right ideal in $S$. We denote the set of all m-regular right congruences on $S$ by $\mathcal{RC}_m(S)$ and the set of all s-regular right congruences on $S$ by $\mathcal{RC}_s(S)$.

Now m-radical and s-radical of a semiring $S$ are characterized in terms of the m-regular and the s-regular right congruences, respectively. First, we must associate the regular right congruences on a semiring $S$ with the semimodules over $S$. Let $M$ be a right $S$-semimodule. For every $m \in M$, we define
\[
\delta_m = \{(a,b) \in S\times S \mid \; ma = mb\}.
\]
Then $\delta_m$ is a right congruence on $S$. The following result shows that every regular right congruence is of this form. The following result is by analogy with \cite{Hoehnke1964}.
\begin{lemma}                          \label{delta m}
Let $S$ be semiring and $M$ be a right $S$-semimodule. Then
\begin{enumerate}
\item[(i)]
$ann_S(M) = \cap_{m \in M}\delta_m$.
\item[(ii)]
for every regular right congruence $\mu$ on $S$, there exists an element $e \in S$ such that $\delta_{[e]_{\mu}} = \mu$ where $[e]_\mu$ is an element of the right $S$-semimodule $S/\mu$.
\item[(iii)]
if moreover, $M$ is minimal then for each $m (\neq 0) \in M$, $\delta_m$ is an m-regular right congruence on $S$.
\item[(iv)]
if moreover, $M$ is simple then for each $m (\neq 0) \in M$, $\delta_m$ is an s-regular right congruence on $S$.
\end{enumerate}
\end{lemma}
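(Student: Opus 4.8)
The four parts are largely routine; the plan is to dispatch (i) and (ii) by direct computation and to reduce (iii) and (iv) to Theorems \ref{iff minimal} and \ref{iff simple}. For (i), unravelling the definitions, $(s_1,s_2) \in ann_S(M)$ means $ms_1 = ms_2$ for all $m \in M$, which is exactly the condition $(s_1,s_2) \in \delta_m$ for all $m \in M$; hence $ann_S(M) = \cap_{m \in M}\delta_m$. For (ii), let $e \in S$ be a witness to the regularity of $\mu$, so $(es,s) \in \mu$ for every $s \in S$, and regard $[e]_\mu$ as an element of the $S$-semimodule $S/\mu$. Then $\delta_{[e]_\mu} = \{(a,b) \mid [ea]_\mu = [eb]_\mu\} = \{(a,b) \mid (ea,eb) \in \mu\}$. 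The point that uses the regularity hypothesis (and is essentially the whole content, since $S$ need not have a unit) is that $(ea,a),(eb,b) \in \mu$ together with symmetry and transitivity of $\mu$ give $(ea,eb) \in \mu \iff (a,b) \in \mu$; therefore $\delta_{[e]_\mu} = \mu$.

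For (iii), suppose $M$ is minimal and $0 \neq m \in M$. By Lemma \ref{minimal}, $mS = M$, so $\phi : S \to M$, $s \mapsto ms$, is a surjective $S$-semimodule homomorphism with kernel exactly $\delta_m$; thus $S/\delta_m \simeq M$ is minimal. In fact $\delta_m$ is precisely the congruence built from $m$ in the proof of Theorem \ref{iff minimal}, so that argument shows verbatim that $\delta_m$ is regular (choose $e$ with $me = m$, whence $(es,s) \in \delta_m$) and that $[0]_{\delta_m} = \{s \in S \mid ms = 0\}$ is a maximal $\delta_m$-saturated right ideal; hence $\delta_m \in \mathcal{RC}_m(S)$. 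For (iv), if $M$ is simple it is in particular minimal, so (iii) already gives that $\delta_m$ is $m$-regular, and it only remains to check that $\delta_m$ is a maximal regular right congruence. Since $S/\delta_m$ is isomorphic to the simple (hence congruence-simple) semimodule $M$, every right congruence $\phi$ with $\delta_m \subseteq \phi$ yields a congruence $\phi/\delta_m$ on $S/\delta_m$ equal to $\Delta_{S/\delta_m}$ or $\nabla_{S/\delta_m}$, forcing $\phi = \delta_m$ or $\phi = \nabla_S$; so $\delta_m$ is maximal among (regular) right congruences, i.e. $\delta_m \in \mathcal{RC}_s(S)$. Equivalently, one may simply apply Theorem \ref{iff simple} to the semimodule $S/\delta_m$.

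There is no serious obstacle here. The only steps demanding a little care are the use of regularity in (ii) to collapse the condition $(ea,eb)\in\mu$ to $(a,b)\in\mu$, and the observation in (iii) and (iv) that the congruence produced from a nonzero $m$ in the proofs of Theorems \ref{iff minimal} and \ref{iff simple} is literally $\delta_m$ (not merely isomorphic to it), so that the maximality conclusions about $[0]_{\delta_m}$ and about $\delta_m$ itself transfer without change. One should also note in passing that $(S/\delta_m)S \neq 0$, which holds because $S/\delta_m \simeq M$ and $MS \neq 0$ by hypothesis, so that $S/\delta_m$ genuinely qualifies as a minimal (resp. simple) semimodule in the sense of the Definition.
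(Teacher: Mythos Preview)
Your proposal is correct and follows essentially the same approach as the paper. The only organizational difference is that for (iii) and (iv) you explicitly observe that $\delta_m$ is literally the congruence $\mu=\ker\phi$ constructed in the proofs of Theorems~\ref{iff minimal} and~\ref{iff simple} and invoke those results, whereas the paper reproves the regularity of $\delta_m$, the maximality of $[0]_{\delta_m}$, and (in (iv)) the maximality of $\delta_m$ directly; the underlying arguments are identical, and your framing of (iv) via the quotient congruence $\phi/\delta_m$ on $S/\delta_m$ is just the isomorphic version of the paper's construction of $\phi_M$ on $M$.
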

\begin{proof}
(i) Follows trivially. \\
(ii) \ Since $\mu$ is a regular right congruence on $S$, there exists an element $e \in S$ such that for every $s \in S$, $(es,s) \in \mu$ and so $[e]_{\mu}s = [s]_{\mu}$ in the right $S$-semimodule $S/\mu$.

Hence for every, $s,t \in S$, $(s,t) \in \mu, \Leftrightarrow [s]_{\mu} = [t]_{\mu} \Leftrightarrow [e]_{\mu}s = [e]_{\mu}t \Leftrightarrow (s,t) \in \delta_{[e]_{\mu}}$ which implies that $\mu = \delta_{[e]_{\mu}}$.
\\ (iii) \ Let $M$ be a minimal $S$-semimodule and $m(\neq 0) \in M$. Then $mS = M$, by Lemma \ref{minimal}. Hence there exists an element $a \in S$ such that $ma = m$. Then for every $s \in S$, we have $mas = ms $, which implies that $(as, s) \in \delta_m$. Thus $\delta_m$ is a regular right congruence where $[0]_{\delta_m} = \{ s \in S \mid ms = 0 \}$. Now $s \delta_m t $ and $t \in [0]_{\delta_m}$ implies that $ms = mt = 0$ and so $s \in [0]_{\delta_m}$. Thus $[0]_{\delta_m}$ is a $\delta_m$-saturated right ideal in $S$.

Let $I$ be a $\delta_m$-saturated right ideal in $S$ such that $[0]_{\delta_m} \subsetneq I$. Consider an element $x \in I \backslash [0]_{\delta_m}$. Then $mx \neq 0$, which implies that $(mx)S = M$. So for each $s \in S$, there exists an element $t \in S$ such that $mxt = ms$, which implies that $(xt,s) \in \delta_m$. Since $xt \in I$, it follows that $s \in I$. Hence $S = I$ and so $[0]_{\delta_m}$ is a maximal $\delta_m$-saturated ideal in $S$.
\\ (iv) \ Let $M$ be a simple $S$-semimodule and $m(\neq 0) \in M$. Then by (3), we have $\delta_m$ is a regular right congruence on $S$ such that $[0]_{\delta_m}$ is a maximal $\delta_m$-saturated ideal in $S$. Let $\phi$ be a right congruence on $S$ containing $\delta_m$. We define $\phi_M = \{ (ms,mt) \in M \times M \mid (s,t) \in \phi \}$. Then for every $m \neq 0$ in $M$, $mS = M$, implies that $\phi_M$ is reflexive. Also, it follows from the definition that $\phi_M$ is a congruence on $M$. Hence $\phi_M$ is either $\Delta_M$ or $\nabla_M$ which implies that $\phi$ is either $\delta_m$ or $\nabla_S$. Therefore $\delta_m$ is an s-regular right congruence on $S$.
\end{proof}

If 0 is the zero element of an $S$-semimodule $M$, then $\delta_0=S \times S$, and so $ann_S(M) = \cap_{m \neq 0}\delta_m$. Thus Lemma \ref{delta m} tells us that the annihilator of every minimal (simple) $S$-semimodule $M$ can be expressed as the intersection of the family $\{\delta_m \mid m (\neq 0) \in M\}$ of m-regular (s-regular) right congruences on $S$. This characterization of the annihilators of the minimal and simple semimodules gives the following characterization of the m-radical and s-radical of a semiring.
\begin{theorem}
Let $S$ be a semiring. Then
\begin{enumerate}
\item[(i)]
$rad_m(S) = \cap_{\mu \in \mathcal{RC}_m(S)} \mu$
\item[(ii)]
$rad_s(S) = \cap_{\mu \in \mathcal{RC}_s(S)} \mu$
\end{enumerate}
\end{theorem}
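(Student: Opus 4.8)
The plan is to prove the two equalities by mutual inclusion, using Lemma \ref{delta m} together with Theorems \ref{iff minimal} and \ref{iff simple}. I will give the argument for (i); the argument for (ii) is identical with ``minimal'', ``m-regular'', $\mathcal{M}(S)$, $\mathcal{RC}_m(S)$ replaced throughout by ``simple'', ``s-regular'', $\mathcal{S}(S)$, $\mathcal{RC}_s(S)$, invoking Theorem \ref{iff simple} and part (iv) of Lemma \ref{delta m} in place of Theorem \ref{iff minimal} and part (iii).

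For the inclusion $rad_m(S) \subseteq \cap_{\mu \in \mathcal{RC}_m(S)} \mu$: let $\mu$ be any m-regular right congruence on $S$. By definition $\mu$ is a regular right congruence with $[0]_\mu$ a maximal $\mu$-saturated right ideal, so by Theorem \ref{iff minimal} the quotient $S/\mu$ is a minimal $S$-semimodule, i.e.\ $S/\mu \in \mathcal{M}(S)$. By Lemma \ref{delta m}(ii) there is an element $e \in S$ with $\mu = \delta_{[e]_\mu}$, where $[e]_\mu$ is an element of the $S$-semimodule $S/\mu$. Hence $ann_S(S/\mu) = \cap_{x \in S/\mu}\delta_x \subseteq \delta_{[e]_\mu} = \mu$ by Lemma \ref{delta m}(i). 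Since $S/\mu \in \mathcal{M}(S)$, the defining intersection $rad_m(S) = \cap_{M \in \mathcal{M}(S)} ann_S(M)$ is contained in $ann_S(S/\mu)$, and therefore in $\mu$. As $\mu$ was an arbitrary element of $\mathcal{RC}_m(S)$, the inclusion follows. (If $\mathcal{RC}_m(S) = \emptyset$ the right-hand side is $\nabla_S$ and there is nothing to prove; note that by Theorem \ref{iff minimal} this happens exactly when $\mathcal{M}(S) = \emptyset$, in which case $rad_m(S) = \nabla_S$ as well, so the two sides agree.)

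For the reverse inclusion $\cap_{\mu \in \mathcal{RC}_m(S)} \mu \subseteq rad_m(S)$: fix any $M \in \mathcal{M}(S)$; it suffices to show $\cap_{\mu \in \mathcal{RC}_m(S)} \mu \subseteq ann_S(M)$. By the remark following Lemma \ref{delta m}, $ann_S(M) = \cap_{m \ne 0}\delta_m$ (since $\delta_0 = \nabla_S$), and by Lemma \ref{delta m}(iii) each $\delta_m$ with $m \ne 0$ is an m-regular right congruence, hence lies in the indexing family $\mathcal{RC}_m(S)$. Therefore $\cap_{\mu \in \mathcal{RC}_m(S)} \mu \subseteq \cap_{m \ne 0}\delta_m = ann_S(M)$. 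Intersecting over all $M \in \mathcal{M}(S)$ gives $\cap_{\mu \in \mathcal{RC}_m(S)} \mu \subseteq rad_m(S)$, completing the proof of (i); the case $\mathcal{M}(S) = \emptyset$ was already handled above.

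I do not anticipate a serious obstacle here: the theorem is essentially a repackaging of Lemma \ref{delta m}, and the only point requiring a moment's care is the bookkeeping of the empty cases (no minimal resp.\ simple semimodules), which must be reconciled with the conventions $rad_m(S) = \nabla_S$, $rad_s(S) = \nabla_S$ laid down in the definition. The mild subtlety worth flagging explicitly is that in Lemma \ref{delta m}(i) the intersection runs over \emph{all} $m \in M$, so one must separately observe that dropping $m = 0$ changes nothing because $\delta_0 = \nabla_S$ — this is exactly the sentence the authors inserted before the theorem, and it is what licenses identifying $ann_S(M)$ with an intersection of members of $\mathcal{RC}_m(S)$ (resp.\ $\mathcal{RC}_s(S)$).
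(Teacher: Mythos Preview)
Your proof is correct and follows essentially the same approach as the paper: both directions rest on Lemma \ref{delta m} together with Theorem \ref{iff minimal} (resp.\ \ref{iff simple}), using that $ann_S(M)=\cap_{m\neq 0}\delta_m$ with each $\delta_m\in\mathcal{RC}_m(S)$ for one inclusion, and that $S/\mu$ is minimal with $\mu=\delta_{[e]_\mu}\supseteq ann_S(S/\mu)$ for the other. Your write-up is in fact slightly more careful than the paper's in treating the empty case explicitly.
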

\begin{proof}
(i) \ If $M$ is a minimal $S$-semimodule then for all $m \in M, m\neq 0$, $\delta_m \in \mathcal{RC}_m(S)$ by  Lemma \ref{delta m}. Hence $\cap_{\mu \in \mathcal{RC}_m(S)}\mu \subseteq \cap_{m(\neq 0)\in M}\delta_m = ann_S(M)$. Thus it follows that $\cap_{\mu \in \mathcal{RC}_m(S)}\mu \subseteq \cap_{M\in \ms}ann_S(M)$.

Now let $(a,b) \in \cap_{M \in \ms}ann_S(M)$. Then Theorem \ref{iff minimal} implies that $(a,b) \in ann_S(S/\mu)$ for every $\mu \in \mathcal{RC}_m(S)$. For each $\mu \in \mathcal{RC}_m(S)$, we have $ann_S(S/\mu) = \cap_{s\in S}\delta_{[s]_{\mu}}$. Also there exists an element $e\in S$ such that $\delta_{[e]_{\mu}} = \mu$ which implies that $(a,b) \in \mu$. Hence $(a,b) \in \cap_{\mu\in \mathcal{RC}_m(S)} \mu$ and it follows that $ann_S(M) \subseteq \cap_{\mu\in \mathcal{RC}_m(S)}\mu$. Thus we have $rad_m(S) = \cap_{\mathcal{M}(S)}ann_S(M) = \cap_{\mu \in \mathcal{RC}_m(S)}\mu$.
\\ (ii) \ If $M$ is a simple $S$-semimodule, then for all $m \in M (m \neq 0)$, $\delta_m \in \mathcal{RC}_s(S)$ by Lemma \ref{delta m}. Thus $\cap_{\mu \in \mathcal{RC}_s(S)} \mu \subseteq \cap_{m(\neq 0) \in M} \delta_m = ann_S(M)$. Therefore $\cap_{\mu \in \mathcal{RC}_s(S)} \mu \subseteq \cap_{M \in \mathcal{S}(S)} ann_S(M)$.

The reverse inclusion follows by Theorem \ref{iff simple} and Lemma \ref{delta m}, similarly as in the proof of (i). Therefore $rad_s(S) = \cap_{\mu \in \mathcal{RC}_s(S)} \mu$.
\end{proof}
\begin{example}
Let $F$ be a semifield. Then $F$ has no nontrivial proper ideals. Hence $\Delta_F \in \mathcal{RC}_m(F)$ which implies that $rad_m(F)=\Delta_F$.

In particular, the max-plus algebra $\mathbb{R}_{max} = (\mathbb{R} \cup \{-\infty\}, max, + )$ is a semifield. Hence $rad_m(\mathbb{R}_{max}) = \Delta_{\mathbb{R}_{max}}$.
\end{example}

Subsemiring of an $m$-semisimple semiring may not be $m$-semisimple, as we see in the following example.
\begin{example}
Consider the subsemiring $S = \mathbb{R}^+ \cup\{0, -\infty \}$ of the max-plus algebra $\mathbb{R}_{max}$ where $\mathbb{R}^+$ is the set of all positive reals. Then $S$ is an additively idempotent semiring with $1_S = 0$. Hence every right congruence on $S$ is a regular congruence.

Let $\rho \neq \Delta_S, \nabla_S$ be a congruence on $S$. Since $\{-\infty\}$ and $S$ are the only two saturated ideals in $S$, we have $[-\infty]_{\rho} = \{-\infty\}$. Then there exist $x, y \in \mathbb{R}^+$ and $x < y$ such that $(x, y) \in \rho $. If $x < y$ and $(x, y) \in \rho$ then for $a=y-x$ we have $x \rho (y+na)$ for every positive integer $n$; and so $[x, \infty) \times [x, \infty) \subseteq \rho$. Hence we have $\rho = (J \times J) \cup \Delta_S$ where $J$ is either $[a,\infty)$ or $(a,\infty)$, by the completeness property of $\mathbb{R}$. If $J \neq [0, \infty)$ then $J \cup \{-\infty\}$ is a $\rho$-saturated proper ideal containing $[-\infty]_{\rho}$, which implies that $\rho \notin  \mathcal{RC}_m(S)$. Hence $\rho=([0, \infty) \times [0, \infty)) \cup \Delta_S$ is the only m-regular congruence on $S$, and  it follows that $rad_m(S) = ([0, \infty) \times [0, \infty)) \cup \Delta_S$.
\end{example}

If $\rho$ is a right congruence on $S$ then $S/\rho$ is a right S-semimodule such that $ann_S(S/\rho)=\{ (x,y) \in \nabla_S \mid \; (sx,sy) \in \rho \mbox{ for all } s \in S\}$. Define
\[
(\rho:\nabla_S)=\{ (x,y) \in \nabla_S \mid \; (sx,sy) \in \rho \mbox{ for all } s \in S\}.
\]
Then $(\rho:\nabla_S)$ is a congruence on $S$. Furthermore, if $\rho$ is regular, there exists an element $e \in S$ such that $(es,s) \in \rho$ for all $s \in S$. Let $(x,y) \in (\rho :\nabla_S)$. Then $(ex,ey) \in \rho$ implies that $x \rho ex \rho ey \rho y$ and so $(x, y) \in \rho$. Thus $(\rho: \nabla_S) \subseteq \rho$. In fact, $(\rho:\nabla_S)$ is the largest congruence on $S$ contained in $\rho$ for every regular right congruence $\rho$ on $S$.

Now $(\rho:\nabla_S)=ann_S(S/\rho)$ together with Theorem \ref{iff minimal} and Theorem \ref{iff simple} turn out to be yet another characterization of the m-radical and s-radical of a semiring.
\begin{theorem}
For every semiring $S$, we have
\begin{enumerate}
\item
$rad_m(S) = \cap_{\rho \in \mathcal{RC}_m(S)} (\rho: \nabla_S)$, and
\item
$rad_s(S) = \cap_{\rho \in \mathcal{RC}_s(S)} (\rho: \nabla_S)$.
\end{enumerate}
\end{theorem}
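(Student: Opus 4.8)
The plan is to deduce both identities directly from the previous theorem, which already gives $rad_m(S) = \cap_{\mu \in \mathcal{RC}_m(S)} \mu$ and $rad_s(S) = \cap_{\mu \in \mathcal{RC}_s(S)} \mu$, together with the two facts recorded just before the statement: that $(\rho:\nabla_S) = ann_S(S/\rho)$ for every right congruence $\rho$, and that $(\rho:\nabla_S) \subseteq \rho$ whenever $\rho$ is regular. Since every element of $\mathcal{RC}_m(S)$ (resp. $\mathcal{RC}_s(S)$) is in particular a regular right congruence, both inclusions will follow from these ingredients. I would write out (i) in full; part (ii) is verbatim the same argument with $\mathcal{M}(S)$, $\mathcal{RC}_m(S)$ and Theorem \ref{iff minimal} replaced by $\mathcal{S}(S)$, $\mathcal{RC}_s(S)$ and Theorem \ref{iff simple}.

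For the inclusion $\cap_{\rho \in \mathcal{RC}_m(S)}(\rho:\nabla_S) \subseteq rad_m(S)$: for each $\rho \in \mathcal{RC}_m(S)$ the congruence $\rho$ is regular, so $(\rho:\nabla_S) \subseteq \rho$; intersecting over all $\rho \in \mathcal{RC}_m(S)$ gives $\cap_{\rho}(\rho:\nabla_S) \subseteq \cap_{\rho}\rho$, and the right-hand side equals $rad_m(S)$ by the previous theorem. For the reverse inclusion, fix $\rho \in \mathcal{RC}_m(S)$. By definition $\rho$ is a regular right congruence whose zero class $[0]_\rho$ is a maximal $\rho$-saturated right ideal, so Theorem \ref{iff minimal} shows that $S/\rho$ is a minimal $S$-semimodule, i.e. $S/\rho \in \mathcal{M}(S)$. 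Hence $rad_m(S) = \cap_{M \in \mathcal{M}(S)} ann_S(M) \subseteq ann_S(S/\rho) = (\rho:\nabla_S)$. As $\rho$ ranges over $\mathcal{RC}_m(S)$ this yields $rad_m(S) \subseteq \cap_{\rho \in \mathcal{RC}_m(S)}(\rho:\nabla_S)$, and combining the two inclusions proves (i). For (ii), Theorem \ref{iff simple} supplies the statement that $S/\rho$ is simple for each $\rho \in \mathcal{RC}_s(S)$, and the rest of the argument is identical.

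I do not expect any genuine obstacle: the substantive work has already been done in identifying $(\rho:\nabla_S)$ with $ann_S(S/\rho)$, in Theorems \ref{iff minimal} and \ref{iff simple}, and in the preceding theorem. The only point to be slightly careful about is that the two intersections $\cap_{\rho}(\rho:\nabla_S)$ and $\cap_{\rho}\rho$ are taken over the \emph{same} index set $\mathcal{RC}_m(S)$ (resp. $\mathcal{RC}_s(S)$), so that the term-by-term inclusion $(\rho:\nabla_S)\subseteq\rho$ passes to the intersections; and, dually, that $\mathcal{RC}_m(S)$ is exactly the family of congruences $\rho$ for which Theorem \ref{iff minimal} applies to make $S/\rho$ minimal, so that every such $S/\rho$ genuinely occurs in the intersection defining $rad_m(S)$.
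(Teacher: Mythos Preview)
Your proposal is correct and follows essentially the same route as the paper, which simply states that the result follows from the identity $(\rho:\nabla_S)=ann_S(S/\rho)$ together with Theorems \ref{iff minimal} and \ref{iff simple} and gives no further details. The only cosmetic difference is that for the inclusion $\cap_{\rho}(\rho:\nabla_S)\subseteq rad_m(S)$ you invoke the previous theorem $rad_m(S)=\cap_{\rho}\rho$ and the containment $(\rho:\nabla_S)\subseteq\rho$, whereas one could equally well argue directly from the definition $rad_m(S)=\cap_{M\in\mathcal{M}(S)}ann_S(M)$ by noting that every minimal $M$ is isomorphic to some $S/\rho$ with $\rho\in\mathcal{RC}_m(S)$, so $ann_S(M)=(\rho:\nabla_S)$; either way the two index sets yield the same family of congruences.
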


Let $R$ and $S$ be two semirings. Then $R \times S$ is a semiring where the addition and the multiplication are defined componentwise. Consider two right congruences $\sigma$ and $\eta$ on $R$ and $S$ respectively. Define
\begin{center}
$\sigma \times \eta = \{ ((r_1, s_1), (r_2, s_2)) \mid (r_1, r_2) \in \sigma \mbox{ and } (s_1, s_2) \in \eta \}$.
\end{center}

Then $\sigma \times \eta$ is a right congruence on the semiring $R \times S$ where $[(0, 0)]_{\sigma \times \eta} = \{(r, s) \in R \times S \mid (r, 0) \in \sigma \ \mbox{and} \ (0, s) \in \eta \} = [0]_{\sigma} \times [0]_{\eta}$. If moreover, $\sigma$ and $\eta$ are regular, then $\sigma \times \eta$ is also regular on $R \times S$.

For every right congruence $\rho$ on $R \times S$, define
\begin{align*}
& \rho_R = \{ (r_1, r_2) \in R \times R \mid \exists \; s_1, s_2 \in S \mbox{ such that } (r_1, s_1) \rho (r_2, s_2) \} \\
\mbox{and } & \rho_S = \{ (s_1, s_2) \in S \times S \mid \exists \ r_1, r_2 \in R \mbox{ such that } (r_1, s_1) \rho (r_2, s_2) \}.
\end{align*}

Then $\rho \subseteq \rho_R \times \rho_S$. The following result shows that the equality holds if $\rho$ is a $m$-regular congruence on $R \times S$.
\begin{lemma}                                               \label{Lemma1}
Let $R$ and $S$ be two semirings. Then $\rho \in \mathcal{RC}_m(R\times S)$ if and only if $\rho = \sigma \times \nabla_S$ or $\rho = \nabla_R \times \delta$ where $\sigma \in \mathcal{RC}_m(R)$ and $\delta \in \mathcal{RC}_m(S)$.
\end{lemma}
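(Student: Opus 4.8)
The plan is to reduce the statement to Theorem~\ref{iff minimal} by studying the minimal $R\times S$-semimodule $(R\times S)/\rho$ attached to an m-regular right congruence $\rho$, the key structural input being that the ideals $R\times\{0\}$ and $\{0\}\times S$ of $R\times S$ annihilate one another, i.e. $(0,s)(r,0)=(0,0)$.

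For the ``if'' direction I would take $\sigma\in\mathcal{RC}_m(R)$ and set $\rho:=\sigma\times\nabla_S$ (the case $\nabla_R\times\delta$ being symmetric). As already noted in the excerpt, $\rho$ is a regular right congruence and $[0]_\rho=[0]_\sigma\times S$. To prove this is a maximal $\rho$-saturated right ideal, I would take a $\rho$-saturated right ideal $J$ with $[0]_\sigma\times S\subsetneq J$; since $(a,b)\,\rho\,(a,b')$ for all $b,b'\in S$, $\rho$-saturation forces $J=I\times S$ with $I=\{a\in R\mid(a,0)\in J\}$, and one checks that $I$ is a $\sigma$-saturated right ideal of $R$ properly containing $[0]_\sigma$; maximality of $[0]_\sigma$ then yields $I=R$, so $J=R\times S$.

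For the ``only if'' direction, starting from $\rho\in\mathcal{RC}_m(R\times S)$, I would set $M=(R\times S)/\rho$, which is a minimal $R\times S$-semimodule by Theorem~\ref{iff minimal}, and consider the subsemimodule $L=\{m\in M\mid m(r,0)=0\text{ for all }r\in R\}$. The annihilation identity gives $m(0,s)\in L$ for all $m\in M$, $s\in S$, hence $M(\{0\}\times S)\subseteq L$; by minimality $L=0$ or $L=M$. In the case $L=0$ one gets $M(\{0\}\times S)=0$, so $m(r,s)=m(r,0)$ and $M$ is a minimal $R$-semimodule under $m\ast r=m(r,0)$ (same subsemimodules, and $M\ast R=M(R\times\{0\})=M\ne0$). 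Using $M(\{0\}\times S)=0$ and the regularity of $\rho$ one obtains $\{0\}\times S\subseteq[0]_\rho$, hence $(a,b)\,\rho\,(a,0)$ for all $a,b$, which gives both $\rho=\rho_R\times\nabla_S$ and an $R$-semimodule isomorphism $(R\times S)/\rho\simeq R/\rho_R$. Then $R/\rho_R$ is minimal, $\rho_R$ is regular (witnessed by the $R$-coordinate of a regularity element of $\rho$), and arguing as in the proof of Theorem~\ref{iff minimal} the minimality of $R/\rho_R$ forces $[0]_{\rho_R}$ to be a maximal $\rho_R$-saturated right ideal; thus $\rho=\sigma\times\nabla_S$ with $\sigma=\rho_R\in\mathcal{RC}_m(R)$. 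The case $L=M$ is symmetric and gives $\rho=\nabla_R\times\delta$ with $\delta\in\mathcal{RC}_m(S)$.

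I expect the one genuine idea to be isolating the mutual-annihilation identity $(0,s)(r,0)=(0,0)$ and recognising that it forces $L\in\{0,M\}$ — this is precisely the source of the ``either/or'' in the statement. Everything else (that the relevant subsets are saturated right ideals or subsemimodules, and that the displayed correspondences are congruences and semimodule homomorphisms) is routine. One point to be careful about: ``$S/\mu$ minimal'' does not by itself give ``$\mu\in\mathcal{RC}_m(S)$'', so in the last step the regularity of $\rho_R$ has to be extracted separately from that of $\rho$ before the maximality argument for $[0]_{\rho_R}$ is available.
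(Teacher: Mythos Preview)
Your proof is correct, and the ``if'' direction is essentially the paper's argument. For the ``only if'' direction, however, you take a genuinely different route.

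The paper argues entirely on the congruence/ideal side: it forms the projections $\rho_R,\rho_S$, shows that $[0]_{\rho_R}\times[0]_{\rho_S}$ is a $\rho$-saturated right ideal containing $[(0,0)]_\rho$, and uses the maximality of $[(0,0)]_\rho$ to force $[(0,0)]_\rho=[0]_{\rho_R}\times[0]_{\rho_S}$; from properness it then reads off that one of $[0]_{\rho_R},[0]_{\rho_S}$ is the whole factor, and in that case shows (via $I\times S$) that the other projection lies in $\mathcal{RC}_m$ and that $\rho=\rho_R\times\nabla_S$. Your argument instead passes through the minimal semimodule $M=(R\times S)/\rho$ furnished by Theorem~\ref{iff minimal}: the annihilation identity $(0,s)(r,0)=(0,0)$ makes $L=\{m\mid m(R\times\{0\})=0\}$ a subsemimodule containing $M(\{0\}\times S)$, and minimality of $M$ yields the dichotomy $L\in\{0,M\}$ directly. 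The paper's approach stays closer to its internal congruence-theoretic programme and avoids invoking Theorem~\ref{iff minimal}; your approach makes the structural source of the either/or completely transparent (mutual annihilation of the two coordinate ideals) and recycles the forward argument of Theorem~\ref{iff minimal} rather than repeating a saturated-ideal computation. Your explicit remark that regularity of $\rho_R$ must be extracted before concluding $\rho_R\in\mathcal{RC}_m(R)$ is well placed; the paper handles this implicitly by noting at the outset that $\rho_R$ and $\rho_S$ are regular.
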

\begin{proof}
First assume that $\rho \in \mathcal{RC}_m(R\times S)$. Then both $\rho_R$ and $\rho_S$ are regular right congruences on $R$ and $S$, respectively. Also $[0]_{\rho_R}\times [0]_{\rho_S}$ is a proper right ideal of $R \times S$ such that $[(0,0)]_{\rho} \subseteq [0]_{\rho_R}\times [0]_{\rho_S}$.

Now consider $(r, s) \in [0]_{\rho_R}\times [0]_{\rho_S}$ and $(r', s') \in R \times S$ such that $(r', s') \rho (r,
s)$. Then $r' \rho_R r \rho_R 0$ and $s' \rho_S s \rho_S 0$ implies that $(r', s') \in [0]_{\rho_R}\times [0]_{\rho_S}$. Hence $[0]_{\rho_R}\times [0]_{\rho_S}$ is $\rho$-saturated right ideal of $R\times S$. Since $[(0,0)]_{\rho}$ is a maximal $\rho$-saturated right ideal of $R \times S$, it follows that $[(0,0)]_{\rho} = [0]_{\rho_R}\times [0]_{\rho_S}$. Therefore either $[0]_{\rho_R} \neq R$ or $[0]_{\rho_S} \neq S$.

Suppose that $[0]_{\rho_R} \neq R$. Let $I$ be a $\rho_R-saturated$ proper right ideal of $R$ such that $[0]_{\rho_R}
\subseteq I$. Then $I \times S$ is a proper $\rho-saturated$ right ideal of $R \times S$ such that $[0]_{\rho_R}\times [0]_{\rho_S} \subseteq I \times S$. Then maximality of $[(0,0)]_{\rho}$ implies that $[0]_{\rho_R}\times [0]_{\rho_S} = I \times S$ and hence $[0]_{\rho_R} = I$ and $[0]_{\rho_S} = S$. Therefore $[0]_{\rho_R}$ is a maximal $\rho_R-saturated$ right ideal of $R$, which implies that $\rho_R \in \mathcal{RC}_m(R)$. Also $[0]_{\rho_S}= S$ implies that $\rho_S = \nabla_S$.

Now $[(0,0)]_{\rho} = [0]_{\rho_R} \times S$ implies that $(0, s_1) \rho (0, s_2)$ and so $(r, s_1) \rho (r,
s_2)$ for all $r \in R$, $s_1, s_2 \in S$. Let $(r_1, s_1) \rho_R \times \nabla_S (r_2, s_2)$. Then $r_1 \rho_R
 r_2$ which implies that $(r_1, s) \rho (r_2 , s')$ for some $s, s' \in S$. Hence $(r_1,s_1) \rho (r_1, s) \rho
(r_2, s') \rho (r_2, s_2)$ and so $\rho_R \times \nabla_S \subseteq \rho$. Also $\rho \subseteq \rho_R \times \nabla_S$. Therefore $\rho = \rho_R \times \nabla_S$.

If $[0]_{\rho_S} \neq S$, then similarly it follows that $\rho = \nabla_R \times \rho_S$.

Conversely let $\sigma \in \mathcal{RC}_m(R)$. Then $\sigma \times \nabla_S$ is a regular right congruence on $R \times S$ with  $[(0, 0)]_{\sigma \times \nabla_S} = [0]_{\sigma} \times S$. Let $J$ be a proper $\sigma \times \nabla_S$-saturated ideal in $R \times S$ containing $[0]_{\sigma \times \nabla_S}$. Denote $J_R = \{ r \in R \mid \exists s \in S \mbox{ such that } (r, s) \in J \}$. Then $J=J_R \times S$ and $J_R$ is a proper $\sigma$-saturated right ideal in $R$ containing $[0]_{\sigma}$. Therefore $J_R = [0]_{\sigma}$ and so $J = [0]_{\sigma} \times S$. Hence $\sigma \times \nabla_S \in \mathcal{RC}_m(R \times S)$. Similarly it follows that $\nabla_R \times \eta \in \mathcal{RC}_m(R \times S)$ for every $\eta \in \mathcal{RC}_m(S)$.
\end{proof}

The following theorem characterizes the Jacobson m-radical of the product semiring $R \times S$ in terms of the Jacobson m-radicals of the component semirings $R$ and $S$.
\begin{theorem}
Let $R$ and $S$ be two semirings. Then $rad_m(R \times S) = rad_m(R) \times rad_m(S)$.
\end{theorem}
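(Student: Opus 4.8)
The plan is to combine Lemma \ref{Lemma1} with the internal description of the m-radical as an intersection of m-regular right congruences, $rad_m(S) = \bigcap_{\mu \in \mathcal{RC}_m(S)} \mu$. Applying that characterization to the semiring $R \times S$ gives
\[
rad_m(R \times S) = \bigcap_{\rho \in \mathcal{RC}_m(R \times S)} \rho ,
\]
and Lemma \ref{Lemma1} identifies $\mathcal{RC}_m(R \times S)$ as the union $\{\sigma \times \nabla_S \mid \sigma \in \mathcal{RC}_m(R)\} \cup \{\nabla_R \times \delta \mid \delta \in \mathcal{RC}_m(S)\}$. Splitting the intersection accordingly,
\[
rad_m(R \times S) = \Big(\bigcap_{\sigma \in \mathcal{RC}_m(R)} (\sigma \times \nabla_S)\Big) \cap \Big(\bigcap_{\delta \in \mathcal{RC}_m(S)} (\nabla_R \times \delta)\Big).
\]

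The next step is to push the two intersections through the product construction $\sigma \times \eta$. Directly from the definition, a pair $\big((r_1,s_1),(r_2,s_2)\big)$ lies in $\bigcap_{\sigma \in \mathcal{RC}_m(R)} (\sigma \times \nabla_S)$ if and only if $(r_1,r_2) \in \sigma$ for every $\sigma \in \mathcal{RC}_m(R)$ (the $S$-coordinates being unconstrained by $\nabla_S$), i.e. if and only if $(r_1,r_2) \in rad_m(R)$; hence $\bigcap_{\sigma} (\sigma \times \nabla_S) = rad_m(R) \times \nabla_S$, and symmetrically $\bigcap_{\delta} (\nabla_R \times \delta) = \nabla_R \times rad_m(S)$. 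Intersecting these two congruences, $\big((r_1,s_1),(r_2,s_2)\big)$ belongs to $(rad_m(R) \times \nabla_S) \cap (\nabla_R \times rad_m(S))$ exactly when $(r_1,r_2) \in rad_m(R)$ and $(s_1,s_2) \in rad_m(S)$, which is precisely the condition for the pair to lie in $rad_m(R) \times rad_m(S)$. This yields the asserted equality.

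The only point needing a word of care is the degenerate case in which $\mathcal{RC}_m(R)$ or $\mathcal{RC}_m(S)$ is empty — equivalently, by Theorem \ref{iff minimal}, the case in which one factor admits no minimal semimodules. Here the convention $rad_m = \nabla$ for such a semiring together with the convention that an empty intersection of congruences on $R \times S$ equals $\nabla_{R \times S}$ keeps every identity above intact: for instance, if $\mathcal{RC}_m(R) = \emptyset$ then $\bigcap_{\sigma}(\sigma \times \nabla_S) = \nabla_{R \times S} = \nabla_R \times \nabla_S = rad_m(R) \times \nabla_S$. No real obstacle arises; the whole content of the theorem is carried by Lemma \ref{Lemma1}, and what remains is bookkeeping about how intersections of congruences interact with the operation $\sigma \times \eta$ on the product semiring.
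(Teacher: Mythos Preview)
Your proof is correct and follows exactly the same route as the paper's own proof: apply the internal description $rad_m = \bigcap_{\mathcal{RC}_m} \mu$, invoke Lemma \ref{Lemma1} to identify $\mathcal{RC}_m(R\times S)$, and then commute intersections with the product construction. You have in fact written out more detail than the paper (which presents the argument as a five-line display), including the treatment of the degenerate empty-intersection case.
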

\begin{proof}
We have,
\begin{align*}
rad_m(R \times S) &= \cap_{\rho \in  \mathcal{RC}_m(R \times S)} \rho \\
                  &= (\cap_{\rho_R \in \mathcal{RC}_m(R)}(\rho_R \times \nabla_S)) \cap (\cap_{\rho_S \in \mathcal{RC}_m(S)}(\nabla_R \times \rho_S)) \\
                  &= ((\cap_{\rho_R \in \mathcal{RC}_m(R)}\rho_R) \times \nabla_S) \cap (\nabla_R \times (\cap_{\rho_S \in \mathcal{RC}_m(S)} \rho_S)) \\
                  &= (rad_m(R) \times \nabla_S) \cap (\nabla_R \times rad_m(S)) \\
                  &= rad_m(R) \times rad_m(S).
\end{align*}
\end{proof}

Similarly, it can be proved analogous lemmas related to s-radical to get the following result whose proof is omitted.
\begin{theorem}
Let $R$ and $S$ be two semirings. Then $rad_s(R \times S) = rad_s(R) \times rad_s(S)$.
\end{theorem}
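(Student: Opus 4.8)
The plan is to prove an $s$-regular analogue of Lemma \ref{Lemma1} and then repeat, verbatim, the computation used in the preceding theorem. Precisely, I claim that $\rho \in \mathcal{RC}_s(R \times S)$ if and only if $\rho = \sigma \times \nabla_S$ for some $\sigma \in \mathcal{RC}_s(R)$, or $\rho = \nabla_R \times \delta$ for some $\delta \in \mathcal{RC}_s(S)$. Granting this, the internal characterization $rad_s(S) = \cap_{\mu \in \mathcal{RC}_s(S)} \mu$ gives
\begin{align*}
rad_s(R \times S) &= \cap_{\rho \in \mathcal{RC}_s(R \times S)} \rho \\
&= \big( \cap_{\sigma \in \mathcal{RC}_s(R)} (\sigma \times \nabla_S) \big) \cap \big( \cap_{\delta \in \mathcal{RC}_s(S)} (\nabla_R \times \delta) \big) \\
&= \big( (\cap_{\sigma \in \mathcal{RC}_s(R)} \sigma) \times \nabla_S \big) \cap \big( \nabla_R \times (\cap_{\delta \in \mathcal{RC}_s(S)} \delta) \big) \\
&= (rad_s(R) \times \nabla_S) \cap (\nabla_R \times rad_s(S)) = rad_s(R) \times rad_s(S),
\end{align*}
where the degenerate cases (one or both of $\mathcal{RC}_s(R), \mathcal{RC}_s(S)$ empty) are absorbed by the convention that $rad_s$ of a semiring with no simple semimodules equals the full relation, together with $\nabla_R \times \nabla_S = \nabla_{R \times S}$.

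For the claim I would build on Lemma \ref{Lemma1}, which already handles the m-regular content: since $\mathcal{RC}_s(R \times S) \subseteq \mathcal{RC}_m(R \times S)$, any $\rho \in \mathcal{RC}_s(R \times S)$ has the form $\rho_R \times \nabla_S$ with $\rho_R \in \mathcal{RC}_m(R)$ (or the symmetric form with the roles of $R$ and $S$ exchanged). So it only remains to transfer the clause ``maximal among regular right congruences'' across the product. One direction is immediate: if $\rho = \rho_R \times \nabla_S$ is a maximal regular right congruence on $R \times S$ and $\rho_R \subsetneq \sigma' \subsetneq \nabla_R$ with $\sigma'$ a regular right congruence on $R$, then (a product of regular right congruences being regular, as observed before Lemma \ref{Lemma1}) $\sigma' \times \nabla_S$ is a regular right congruence strictly between $\rho$ and $\nabla_{R \times S}$, a contradiction; hence $\rho_R$ is maximal among regular right congruences and therefore $\rho_R \in \mathcal{RC}_s(R)$.

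For the converse, take $\sigma \in \mathcal{RC}_s(R)$. By Lemma \ref{Lemma1} we have $\sigma \times \nabla_S \in \mathcal{RC}_m(R \times S)$, so it suffices to see it is maximal among regular right congruences. Suppose $\sigma \times \nabla_S \subsetneq \tau \subsetneq \nabla_{R \times S}$ with $\tau$ a regular right congruence. Since $(0,0) \in \sigma$ by reflexivity, $\sigma \times \nabla_S$ already identifies $(0, s_1)$ with $(0, s_2)$ for all $s_1, s_2 \in S$, hence so does $\tau$; adding $(r, 0)$ to both sides and using that $\tau$ is a right congruence yields $(r, s_1)\,\tau\,(r, s_2)$ for all $r \in R$, $s_1, s_2 \in S$. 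Thus $\tau_S = \nabla_S$, and interpolating through a fixed second coordinate exactly as in Lemma \ref{Lemma1} gives $\tau = \tau_R \times \nabla_S$. As in Lemma \ref{Lemma1}, $\tau_R$ is a regular right congruence on $R$, and now $\sigma \subsetneq \tau_R \subsetneq \nabla_R$ contradicts the maximality of $\sigma$ among regular right congruences. Hence $\sigma \times \nabla_S$ is $s$-regular; the argument for $\nabla_R \times \delta$ is identical, proving the claim.

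The only genuinely new ingredient beyond the m-radical argument is the splitting $\tau = \tau_R \times \nabla_S$ for a regular right congruence $\tau$ lying above $\sigma \times \nabla_S$; this is where the additive structure of $R \times S$ and the reflexivity $(0,0) \in \sigma$ are used, and it is the step I would be most careful to write out in full. Everything else is a line-by-line transcription of the m-regular proof with ``$m$'' replaced by ``$s$'' and the extra maximality clause carried along.
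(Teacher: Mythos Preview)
Your proposal is correct and follows exactly the approach the paper indicates: the paper omits the proof, stating only that ``it can be proved analogous lemmas related to s-radical to get the following result,'' and you have supplied precisely that analogue of Lemma~\ref{Lemma1} together with the identical intersection computation. The extra care you take with the splitting $\tau = \tau_R \times \nabla_S$ and with the degenerate empty-$\mathcal{RC}_s$ cases fills in details the paper leaves implicit.
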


\section{Jacobson semisimple semirings}
In this section, we study the structure of Jacobson m-semisimple and s-semisimple semirings.

A semiring $S$ is Jacobson m-semisimple if $\cap_{M \in \mathcal{RC}_m(S)}ann_S(M) = \Delta_S$. Hence every m-semisimple semiring is a subdirect product of the family of semirings $\{S/ann_S(M) \mid M \in \mathcal{RC}_m(S)\}$. Also, it follows from Lemma \ref{Lemma 2.1} that if $M$ is an $S$-semimodule, then $M$ is a faithful $S/ann_S(M)$-semimodule. If moreover, $M$ is a minimal $S$-semimodule, then $M$ is so as an $S/ann_S(M)$-semimodule. Similarly, every $s$-semisimple semiring $S$ is a subdirect product of the family of semirings $\{S/ann_S(M) \mid M \in \mathcal{RC}_s(S)\}$ where each quotient semiring $S/ann_S(M)$ has the property that $M$ is faithful and a simple semimodule over the quotient semiring $S/ann_S(M)$. Intending to characterize the structure of semisimple semirings, we introduce the following two notions.

\begin{definition}
Let $S$ be a semiring. Then $S$ is called
\begin{enumerate}
\item[(i)] $m$-primitive if there is a faithful minimal $S$-semimodule $M$;
\item[(ii)] $s$-primitive if there is a faithful simple $S$-semimodule $M$.
\end{enumerate}
\end{definition}

If $S$ is an $m$-primitive semiring, then there is a minimal $S$-semimodule $M$ such that $ann_S(M) = \Delta_S$. Hence $rad_m(S) = \cap_{M \in \mathcal{M}(S)} ann_S(M) = \Delta_S$ and so $S$ is $m$-semisimple. Similarly, every $s$-primitive semiring is $s$-semisimple.

A congruence $\sigma$ on $S$ is said to be an \emph{$m$-primitive ($s$-primitive) congruence} if the quotient semiring $S/\sigma$ is an $m$-primitive ($s$-primitive) semiring. Thus $\sigma$ is $m$-primitive($s$-primitive) if and only if there exists a faithful minimal(simple) $S/\sigma$-semimodule $M$.
\begin{lemma}        \label{primitive congruence iff}
Let $\sigma$ be a congruence on $S$. Then the following conditions are equivalent:
\begin{enumerate}
\item
$\sigma$ is $m$-primitive (s-primitive);
\item
$\sigma = ann_S(M)$ for some minimal (simple) $S$-semimodule $M$;
\item
$\sigma = (\rho: \nabla_S)$ for some $\rho \in \mathcal{RC}_m(S)$ ($\rho \in \mathcal{RC}_s(S)$).
\end{enumerate}
\end{lemma}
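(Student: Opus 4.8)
The plan is to prove the cycle of implications $(1)\Rightarrow(2)\Rightarrow(3)\Rightarrow(1)$, carrying out the $m$-primitive case in full; the $s$-primitive case is word-for-word identical once Theorem \ref{iff minimal} is replaced by Theorem \ref{iff simple}, $\mathcal{RC}_m(S)$ by $\mathcal{RC}_s(S)$, and ``minimal'' by ``simple'' everywhere (one additionally invokes part (c) of Lemma \ref{Lemma 2.1} to transfer the congruence lattice of $M$). Throughout, the engine is Lemma \ref{Lemma 2.1}, which lets one pass semimodule-theoretic properties between $S$ and $S/\sigma$; the only thing to watch is that its part (2) requires $\sigma \subseteq ann_S(M)$, and that the definition of a minimal semimodule includes the condition $MS \neq 0$.

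For $(1)\Rightarrow(2)$: assume $S/\sigma$ is $m$-primitive and pick a faithful minimal $S/\sigma$-semimodule $M$. By Lemma \ref{Lemma 2.1}(1), $M$ is an $S$-semimodule under $ms = m[s]_\sigma$ with $\sigma \subseteq ann_S(M)$, so Lemma \ref{Lemma 2.1}(2)(a) gives $\Delta_{S/\sigma} = ann_{S/\sigma}(M) = ann_S(M)/\sigma$, whence $ann_S(M) = \sigma$. Since $ms = m[s]_\sigma$, we have $MS = M(S/\sigma) \neq 0$, and by Lemma \ref{Lemma 2.1}(2)(b) the subsemimodule lattices of $M$ over $S$ and over $S/\sigma$ agree; hence $M$ is a minimal $S$-semimodule with $\sigma = ann_S(M)$, which is (2).

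For $(2)\Rightarrow(3)$: let $M$ be a minimal $S$-semimodule with $\sigma = ann_S(M)$. By Theorem \ref{iff minimal} there is an $m$-regular right congruence $\rho$ on $S$, i.e. $\rho \in \mathcal{RC}_m(S)$, with $S/\rho \simeq M$ as $S$-semimodules. Then, using the description of $ann_S(S/\rho)$ recorded just before the statement, $\sigma = ann_S(M) = ann_S(S/\rho) = (\rho : \nabla_S)$, which is (3). For $(3)\Rightarrow(1)$: let $\sigma = (\rho : \nabla_S)$ with $\rho \in \mathcal{RC}_m(S)$. By Theorem \ref{iff minimal}, $M := S/\rho$ is a minimal $S$-semimodule, and $ann_S(M) = ann_S(S/\rho) = (\rho : \nabla_S) = \sigma$. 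In particular $\sigma \subseteq ann_S(M)$, so Lemma \ref{Lemma 2.1}(2) applies: part (a) gives $ann_{S/\sigma}(M) = ann_S(M)/\sigma = \Delta_{S/\sigma}$, so $M$ is faithful over $S/\sigma$; part (b), together with $M(S/\sigma) = MS \neq 0$, shows $M$ is minimal over $S/\sigma$. Hence $S/\sigma$ is $m$-primitive, i.e. $\sigma$ is $m$-primitive.

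I expect no genuine obstacle here: the statement is a bookkeeping exercise threading Theorem \ref{iff minimal}/\ref{iff simple}, the identity $ann_S(S/\rho) = (\rho : \nabla_S)$, and Lemma \ref{Lemma 2.1} together. The one place deserving explicit care is the back-and-forth transfer of faithfulness and of minimality/simplicity between $S$ and $S/\sigma$ in $(1)\Rightarrow(2)$ and $(3)\Rightarrow(1)$; each such step must first check $\sigma \subseteq ann_S(M)$ before invoking Lemma \ref{Lemma 2.1}(2), and must note $MS \neq 0$ so that ``minimal'' is not vacuously broken.
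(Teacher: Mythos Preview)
Your proof is correct and follows essentially the same route as the paper's own argument: the cycle $(1)\Rightarrow(2)\Rightarrow(3)\Rightarrow(1)$ driven by Lemma~\ref{Lemma 2.1}, Theorem~\ref{iff minimal}, and the identity $ann_S(S/\rho)=(\rho:\nabla_S)$. If anything, your $(2)\Rightarrow(3)$ is slightly cleaner than the paper's, which inserts an unnecessary detour through $S/\sigma$ before concluding $\sigma=(\rho:\nabla_S)$.
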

\begin{proof}
we prove the result for $m$-primitive congruences. The other cases are similar.
\\ $(i) \Rightarrow (ii):$ Let $\sigma$ be an $m$-primitive congruence on $S$. Then there exists a faithful minimal $S/\sigma$-semimodule $M$. Hence, by Lemma \ref{Lemma 2.1}, $M$ is also a minimal $S$-semimodule such that $\sigma \subseteq ann_S(M)$ and $\Delta_{S/\sigma} = ann_{S/\sigma}(M) = ann_S(M)/\sigma$, i.e., $\sigma = ann_S(M)$.
\\ $(ii) \Rightarrow (iii):$ Let $M$ be a minimal $S$-semimodule and $\sigma = ann_S(M)$. Then $M$ is a minimal and faithful right $S/\sigma$-semimodule. Theorem \ref{iff minimal} implies that there exists $\rho \in \mathcal{RC}_m(S)$ such that $M \simeq S/\rho$; and hence $ann_S(M) = (\rho : \nabla_S)$. Then $ann_{(S/\sigma)}(M) = ann_S(M)/\sigma = \Delta_{(S/\sigma)}$ implies that $ann_S(M)=\sigma$. Hence $(\rho : \nabla_S) = ann_S(M) = \sigma$.
\\ $(iii) \Rightarrow (i):$ Let $\rho \in \mathcal{RC}_m(S)$ and $\sigma = (\rho: \nabla_S)$. Then $S/\rho$ is a minimal right $S$-semimodule and $ann_S(S/\rho) = (\rho: \nabla_S) = \sigma$. Since $\sigma$ is a semiring congruence on $S$ and $\sigma = (\rho: \nabla_S)$, it follows that $S/\rho$ is a minimal right $S/\sigma$-semimodule. Also, by Lemma \ref{Lemma 2.1}, we have $ann_{S/\sigma}(S/\rho) = ann_S(S/\rho)/\sigma$ = $\Delta_{S/\sigma}$. Hence $S/\rho$ is a minimal and faithful right $S/\sigma$-semimodule, so $\sigma$ is a $m$-primitive congruence on $S$.
\end{proof}

From the definition, it follows that a semiring $S$ is an $m$-primitive ($s$-primitive) semiring if and only if $\Delta_S$ is an $m$-primitive ($s$-primitive) congruence on $S$. Thus we have:
\begin{corollary}    \label{primitive iff}
Let $S$ be a semiring. Then $S$ is
\begin{enumerate}
\item
$m$-primitive if and only if there exists $\rho \in \mathcal{RC}_m(S)$ such that $(\rho: \nabla_S)= \Delta_S$;
\item
$s$-primitive if and only if there exists $\rho \in \mathcal{RC}_s(S)$ such that $(\rho: \nabla_S)= \Delta_S$.
\end{enumerate}
\end{corollary}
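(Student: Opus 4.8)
The plan is to deduce this immediately from Lemma \ref{primitive congruence iff} applied to the congruence $\sigma = \Delta_S$. First I would record the elementary fact noted just above the statement: since $S/\Delta_S \simeq S$ as semirings, a semiring $S$ is $m$-primitive (respectively $s$-primitive) precisely when $\Delta_S$ is an $m$-primitive (respectively $s$-primitive) congruence on $S$. This uses only that a semiring isomorphism transports minimal semimodules to minimal semimodules and simple semimodules to simple semimodules, and that it preserves faithfulness; all of this is routine.

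Next I would invoke the equivalence $(1)\Leftrightarrow(3)$ of Lemma \ref{primitive congruence iff}: a congruence $\sigma$ on $S$ is $m$-primitive if and only if $\sigma = (\rho:\nabla_S)$ for some $\rho \in \mathcal{RC}_m(S)$. Substituting $\sigma = \Delta_S$ gives: $\Delta_S$ is $m$-primitive if and only if there exists $\rho \in \mathcal{RC}_m(S)$ with $(\rho:\nabla_S) = \Delta_S$. Combined with the first paragraph, this is exactly statement $(1)$. For statement $(2)$ the argument is verbatim the same, with $\mathcal{RC}_s(S)$ and ``simple'' in place of $\mathcal{RC}_m(S)$ and ``minimal'' throughout, using the $s$-primitive half of Lemma \ref{primitive congruence iff}.

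I do not expect any real obstacle: the corollary is a direct specialization of Lemma \ref{primitive congruence iff}, and the only point requiring (trivial) care is the bookkeeping that identifies $S$-semimodule structures with $S/\Delta_S$-semimodule structures. So the proof should be only a few lines; the substantive content was already carried by Lemma \ref{primitive congruence iff}, and hence ultimately by Theorem \ref{iff minimal} and Theorem \ref{iff simple}.
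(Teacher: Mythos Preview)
Your proposal is correct and matches the paper's approach exactly: the paper notes that $S$ is $m$-primitive ($s$-primitive) if and only if $\Delta_S$ is an $m$-primitive ($s$-primitive) congruence on $S$, and then the corollary follows immediately from the equivalence $(1)\Leftrightarrow(3)$ of Lemma~\ref{primitive congruence iff} with $\sigma=\Delta_S$.
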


Division semiring is a noncommutative generalization of a semifield. The following result shows that primitive semirings are other noncommutative generalizations of semifields. The m-primitive semirings generalize the semifields, whereas the s-primitive semirings generalize the congruence-simple semifields.
\begin{theorem}                        \label{m-primitive iff semifield}
Let $S$ be a commutative semiring. Then $S$ is
\begin{enumerate}
\item
$m$-primitive if and only if it is a semifield.
\item
$s$-primitive if and only if it is a congruence-simple semifield.
\end{enumerate}
\end{theorem}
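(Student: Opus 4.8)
The plan is to reduce both statements to Corollary~\ref{primitive iff}, the point being that commutativity makes $(\rho:\nabla_S)$ and $\rho$ coincide for regular $\rho$. The first step is the observation: \emph{if $S$ is commutative and $\rho$ is a regular right congruence on $S$, then $(\rho:\nabla_S)=\rho$.} The inclusion $(\rho:\nabla_S)\subseteq\rho$ for regular $\rho$ is already proved in the text; conversely, if $(x,y)\in\rho$ then, since $\rho$ is a right congruence, $(xs,ys)\in\rho$ for all $s\in S$, and commutativity gives $xs=sx$, $ys=sy$, so $(x,y)\in(\rho:\nabla_S)$. Hence, when $S$ is commutative, the condition in Corollary~\ref{primitive iff} that $(\rho:\nabla_S)=\Delta_S$ forces $\rho=\Delta_S$, so $S$ is $m$-primitive iff $\Delta_S\in\mathcal{RC}_m(S)$, and $S$ is $s$-primitive iff $\Delta_S\in\mathcal{RC}_s(S)$.

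Next I would decode ``$\Delta_S\in\mathcal{RC}_m(S)$''. That $\Delta_S$ is a regular right congruence means there is $e\in S$ with $es=s$ for every $s$; by commutativity $e$ is a multiplicative identity, written $1$, and $1\neq 0$ because $m$-primitivity supplies a minimal semimodule and hence forces $S\neq(0)$. Since every ideal is automatically $\Delta_S$-saturated and right ideals are two-sided (commutativity), ``$[0]_{\Delta_S}=\{0\}$ is a maximal $\Delta_S$-saturated right ideal'' just says $S$ has no ideal strictly between $(0)$ and $S$. For such an $S$ and any $a\neq 0$, the set $aS$ is a nonzero ideal (it contains $a=a1$), hence $aS=S$, so $1=ab$ for some $b$; thus every nonzero element is invertible and $S$ is a semifield. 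For the converse, a semifield is commutative, has a unit, and has no proper nonzero ideal (a nonzero ideal contains some $a\neq 0$, hence $a^{-1}a=1$), so $\Delta_S$ is regular with $\{0\}$ a maximal $\Delta_S$-saturated ideal, i.e.\ $\Delta_S\in\mathcal{RC}_m(S)$; moreover $(\Delta_S:\nabla_S)=\Delta_S$, so Corollary~\ref{primitive iff} applies. This establishes~(1).

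For~(2), by the reduction above $S$ is $s$-primitive iff $\Delta_S\in\mathcal{RC}_s(S)$, which by the definitions means $\Delta_S\in\mathcal{RC}_m(S)$ together with $\Delta_S$ being a maximal regular right congruence. By~(1), the condition $\Delta_S\in\mathcal{RC}_m(S)$ is equivalent to $S$ being a semifield. In a semifield every right congruence is regular (take $e=1$) and, by commutativity, is a congruence; hence, once $S$ is a semifield, ``$\Delta_S$ is a maximal regular right congruence'' says exactly that the only congruences on $S$ are $\Delta_S$ and $\nabla_S$, i.e.\ that $S$ is congruence-simple. Therefore $\Delta_S\in\mathcal{RC}_s(S)$ iff $S$ is a congruence-simple semifield, proving~(2).

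I expect the main work to be purely bookkeeping: pulling the multiplicative identity out of the regularity of $\Delta_S$ (and noting $1\neq 0$), and correctly translating ``$[0]_\mu$ is a maximal $\mu$-saturated right ideal'' into ``no proper nonzero ideal'' and ``maximal regular right congruence'' into ``congruence-simple'' in the presence of a unit. Once the identity $(\rho:\nabla_S)=\rho$ for commutative $S$ is in hand, there is no genuine obstacle.
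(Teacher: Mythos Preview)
Your proof is correct and follows essentially the same route as the paper: both use Corollary~\ref{primitive iff} together with the observation that $(\rho:\nabla_S)=\rho$ for regular $\rho$ in the commutative case to force $\rho=\Delta_S$, and then decode $\Delta_S\in\mathcal{RC}_m(S)$ (resp.\ $\mathcal{RC}_s(S)$) as ``semifield'' (resp.\ ``congruence-simple semifield''). The only cosmetic difference is in the converse directions, where the paper exhibits $S$ directly as a faithful minimal (resp.\ simple) $S$-semimodule rather than re-invoking Corollary~\ref{primitive iff}.
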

\begin{proof}
(i) \ Let $S$ be a commutative $m$-primitive semiring. Then, by Corollary \ref{primitive iff}, there is a regular right congruence $\rho$ in $\mathcal{RC}_m(S)$ such that $( \rho : \nabla_S ) = \Delta_S$. Since $S$ is a commutative semiring, $\rho$ becomes a congruence on $S$ and so $\rho = ( \rho : \nabla_S ) = \Delta_S$. Therefore $\Delta_S \in \mathcal{RC}_m(S)$ and there is an element $e \in S$ such that $es = s = se$ for all $s \in S$. Thus $e$ is a multiplicative identity in $S$. Also $\rho = \Delta_S \in \mathcal{RC}_m(S)$ implies that $(0)$ is maximal $\Delta_S$-saturated ideal in $S$. Since every ideal in $S$ is $\Delta_S$-saturated, it follows that $(0)$ and $S$ are the only two ideals in $S$. Now for each non-zero element $a \in S$, $aS$ is a non-zero ideal in $S$. Hence $aS = S$, which implies that there exists an element $b \in S$ such that $ab = e = ba$. Thus $S$ is a semifield.

 Conversely, let $S$ be a semifield. Then $M=S$ is a minimal $S$-semimodule and $ann_S(M) = \{ (s_1, s_2) \in S \times S \mid ss_1 = ss_2 \; \textrm{for all} \; s \in S \} = \Delta_S$. Therefore $S$ is $m$-primitive.
\\ (ii) \ Let $S$ be a commutative $s$-primitive semiring. Then $S$ is $m$-primitive, and so, by (i), it is a semifield. Also, by Corollary \ref{primitive iff}, there exists a right congruence $\rho \in \mathcal{RC}_s(S)$ such that $(\rho : \nabla_S)=\Delta_S$. Since $S$ is commutative, it follows that $(\rho : \nabla_S)=\rho$. Hence $\Delta_S=\rho \in \mathcal{RC}_s(S)$ which implies that $M=S/\rho \simeq S$ is a simple $S$-semimodule. Hence the semifield $S$ is congruence-simple.

Conversely, if $S$ is a congruence-simple semifield, then $S$ itself is a faithful simple $S$-semimodule. Hence $S$ is $s$-primitive.
\end{proof}

Theorem \ref{m-primitive iff semifield} tells us that the congruence-simple semifields constitute an important subclass of the semifields. Similarly to the fields, the Krull-dimension of a congruence-simple semifield is 0, whereas there are semifields, say, for example, $\mathbb{R}_{max}$ having the Krull-dimension 1 \cite{JM2018JA}. A semiring $S$ is called \emph{zerosumfree} if for every $a, b \in S$, we have $a+b=0$ implies that $a=0$ and $b=0$. It is well known that a semifield $S$ is either zerosumfree or is a field [Proposition 4.34; \cite{Golan1999}]. Every field is a congruence-simple semifield. If $S$ is zerosumfree, then $\rho=\{(s,t) \in S \times S \mid s \neq 0 \neq t\} \cup \{(0, 0)\}$ is a congruence on $S$. So for $S$ to be congruence-simple, we must have $|S|=2$. Then $S$ is the 2-element Boolean algebra $\mathbb{B}$. Thus a congruence-simple semifield is either the 2-element Boolean algebra $\mathbb{B}$ or a field.

However, in the following, we include independent proof.
\begin{theorem}                    \label{congruence-simple iff field}
Let $S$ be a semiring with $|S| > 2$. Then $S$ is a congruence-simple semifield if and only if it is a field.
\end{theorem}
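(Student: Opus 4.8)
The plan is to prove both directions directly (this is the ``independent proof'' promised in the text). The easy direction is that a field $S$ is a congruence-simple semifield: a field is trivially a semifield, and for congruence-simplicity I would use that, since $(S,+)$ is an abelian group, any semiring congruence $\theta$ on $S$ satisfies $(a,b)\in\theta\iff a-b\in[0]_\theta$; hence $[0]_\theta$ is an additive subgroup of $(S,+)$, and compatibility with $\cdot$ forces $[0]_\theta$ to be an ideal of $S$. A field has only the ideals $\{0\}$ and $S$, so $\theta\in\{\Delta_S,\nabla_S\}$.

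For the converse, let $S$ be a congruence-simple semifield with $|S|>2$. First record the trivialities: $|S|>2$ forces $1\neq 0$ (otherwise $S=\{0\}$), and since $0\in S$ there are two distinct nonzero elements. The crux is to rule out $S$ being zerosumfree. If it were, I would consider
\[
\rho=\{(s,t)\in S\times S\mid s\neq 0\neq t\}\cup\{(0,0)\},
\]
which is plainly an equivalence relation. Compatibility with $+$ and $\cdot$ reduces to two facts available in a zerosumfree semifield: $s+c=0$ with $c\neq 0$ would give $s=0$ (zerosumfreeness), and $sc=0$ with $c\neq 0$ would give $s=(sc)c^{-1}=0$. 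Using these, if $s,t\neq 0$ then $s+c$ and $t+c$ are both nonzero for every $c$, while $sc,tc$ are both $0$ when $c=0$ and both nonzero when $c\neq 0$; the class $\{0\}$ behaves trivially. So $\rho$ is a congruence, it is not $\Delta_S$ (take two distinct nonzero $a,b$: $(a,b)\in\rho$), and it is not $\nabla_S$ (since $(0,a)\notin\rho$ for $a\neq 0$), contradicting congruence-simplicity. Hence $S$ is not zerosumfree.

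Finally, a non-zerosumfree semifield is a field: choosing nonzero $a,b$ with $a+b=0$ and multiplying by $a^{-1}$ gives $1+a^{-1}b=0$, so $1$ has an additive inverse $u=a^{-1}b$, and then $s+su=s(1+u)=s0=0$ exhibits an additive inverse for every $s\in S$. Thus $(S,+)$ is a group and $S$ is a commutative ring with $1\neq 0$ in which every nonzero element is a unit, i.e.\ a field. The only step needing genuine (if routine) care is the verification that $\rho$ is a semiring congruence in the zerosumfree case; the remaining steps are bookkeeping.
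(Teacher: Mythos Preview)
Your proof is correct and follows essentially the same route as the paper: both arguments rule out the zerosumfree case by exhibiting the congruence $\rho=\{(0,0)\}\cup\{(s,t)\mid s\neq 0\neq t\}$, and then conclude that $(S,+)$ is a group. The only cosmetic difference is that the paper packages the ``not zerosumfree $\Rightarrow$ additive group'' step via the ideal $Z(S)=\{x\in S\mid x+y=0\text{ for some }y\}$ (which in a semifield must be $\{0\}$ or $S$), whereas you argue directly from a single relation $a+b=0$; your treatment of the converse direction is also more explicit than the paper's ``trivial''.
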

\begin{proof}
First, assume that $S$ is a congruence-simple semifield. Denote $Z(S)=\{x \in S \mid x+y=0 \ \mbox{for some} \ y \in S\}$. Then $Z(S)$ is an ideal of $S$; and so $Z(S)$ is either $\{0\}$ or $S$. If $Z(S)=\{0\}$, then $S$ is zerosumfree. So $\{(0,0)\} \cup \{(s, t) \in S \times S \mid s\neq 0\neq t\}$ induces a nontrivial congruence on $S$, which contradicts that $S$ is congruence-simple. Hence $Z(S)=S$ which implies that $(S, +)$ is a group. Thus $S$ is a field.

Converse follows trivially.
\end{proof}

Thus a zerosumfree semifield $S$ with $|S| > 2$ can not be congruence-simple. So, in particular, the max-plus algebra $\mbr_{max}$ is a semifield but not congruence-simple. Hence $\mbr_{max}$ is m-primitive but not s-primitive.

The 2-element Boolean algebra $\mathbb{B}$ and the field $\mbz_2$ of all integers modulo 2 are the only semifields of order two up to isomorphism. Hence it turns out to be the following specific characterization of the commutative s-primitive semirings.
\begin{corollary}                            \label{commutative s-primitive semirings}
A commutative semiring $S$ is s-primitive if and only if it is either the 2-element Boolean algebra $\mathbb{B}$ or a field.
\end{corollary}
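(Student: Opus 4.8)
The plan is to derive this immediately from Theorem~\ref{m-primitive iff semifield}(2) together with Theorem~\ref{congruence-simple iff field}, after disposing of the case $|S| = 2$ by hand. By Theorem~\ref{m-primitive iff semifield}(2), a commutative semiring $S$ is $s$-primitive precisely when it is a congruence-simple semifield, so it suffices to show that the congruence-simple semifields are exactly the two-element Boolean algebra $\mathbb{B}$ together with the fields.

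For the forward direction, I would suppose $S$ is a congruence-simple semifield and split on $|S|$. If $|S| > 2$, then Theorem~\ref{congruence-simple iff field} applies directly and forces $S$ to be a field. If $|S| \leq 2$, I would observe that a division semiring contains the two distinct elements $0$ and $1$, so $|S| = 2$; up to isomorphism the only semifields of order two are $\mathbb{B}$ and $\mbz_2$, and $\mbz_2$ is a field, so again $S$ is either $\mathbb{B}$ or a field.

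For the converse, every field is a congruence-simple semifield (by Theorem~\ref{congruence-simple iff field} when its order exceeds two, and directly in the remaining case $\mbz_2$), while $\mathbb{B}$, having only the two trivial congruences $\Delta_{\mathbb{B}}$ and $\nabla_{\mathbb{B}}$, is congruence-simple and is obviously a semifield. Applying Theorem~\ref{m-primitive iff semifield}(2) once more then converts the equivalence ``$S$ is a congruence-simple semifield $\iff$ $S$ is $\mathbb{B}$ or a field'' back into the stated characterization of commutative $s$-primitive semirings.

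I do not expect a genuine obstacle here; the only point needing slight care is the bookkeeping in the order-two case, namely observing that both semifields of order two are (automatically) congruence-simple and correctly separating the one that is a field ($\mbz_2$) from the one that is not ($\mathbb{B}$), so that the conclusion is phrased exactly as claimed.
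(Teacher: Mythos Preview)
Your proposal is correct and follows essentially the same route as the paper: the corollary is not given a separate proof in the paper but is presented as an immediate consequence of Theorem~\ref{m-primitive iff semifield}(2) together with Theorem~\ref{congruence-simple iff field} and the preceding remark that the only semifields of order two are $\mathbb{B}$ and $\mbz_2$. Your handling of the $|S|=2$ case, including the observation that any two-element semifield is automatically congruence-simple, matches the paper's implicit reasoning.
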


A semiring $S$ is called a \emph{subdirect product} of a family $\{S_{\alpha}\}_{\Delta}$ of semirings if there is an one-to-one semiring homomorphism $\phi : S \longrightarrow \prod_{\Delta}S_{\alpha}$ such that for each $\alpha \in \Delta$, the composition $\pi_{\alpha} \circ \phi : S \longrightarrow S_{\alpha}$ is onto where $\pi_{\alpha} : \prod_{\Delta}S_{\alpha} \longrightarrow S_{\alpha}$ is the projection mapping.

It is well known that a semiring $S$ is a subdirect product of a family $\{S_{\alpha}\}_{\Delta}$ of semirings if and only if there is a family $\{\rho_{\alpha}\}_{\Delta}$ of congruences on $S$ such that $S/\rho_{\alpha} \simeq S_{\alpha}$ for every $\alpha \in \Delta$ and $\cap_{\Delta} \rho_{\alpha} = \Delta_S$.
\begin{theorem}                                    \label{structure of semisimple semirings}
A semiring $S$ is $m$-semisimple ($s$- semisimple) if and only if it is a subdirect product of $m$-primitive ($s$-primitive) semirings.
\end{theorem}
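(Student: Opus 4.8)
The plan is to prove the statement for $m$-semisimple and $m$-primitive semirings; the case of $s$-semisimplicity is identical after replacing $\mathcal{RC}_m(S)$, ``minimal'' and ``$m$-primitive'' by $\mathcal{RC}_s(S)$, ``simple'' and ``$s$-primitive'' throughout. Two earlier facts do all the work: Lemma \ref{primitive congruence iff}, which identifies the $m$-primitive congruences on $S$ as exactly the congruences of the form $ann_S(M)$ with $M$ a minimal $S$-semimodule, equivalently the congruences $(\rho:\nabla_S)$ with $\rho\in\mathcal{RC}_m(S)$; and the recalled correspondence between subdirect decompositions of $S$ and families of congruences on $S$ whose intersection is $\Delta_S$.

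For the forward implication, assume $rad_m(S)=\Delta_S$. Rather than index by the (possibly proper) class $\mathcal{M}(S)$, I would use the internal characterization $rad_m(S)=\bigcap_{\rho\in\mathcal{RC}_m(S)}(\rho:\nabla_S)$ proved earlier, in which the index set $\mathcal{RC}_m(S)$ is an honest set. For each $\rho\in\mathcal{RC}_m(S)$ set $\sigma_\rho=(\rho:\nabla_S)$; this is a congruence on $S$, and by Lemma \ref{primitive congruence iff} (the implication $(iii)\Rightarrow(i)$) it is an $m$-primitive congruence, so each $S/\sigma_\rho$ is an $m$-primitive semiring. Since $\bigcap_{\rho\in\mathcal{RC}_m(S)}\sigma_\rho=rad_m(S)=\Delta_S$, the family $\{\sigma_\rho\}_{\rho\in\mathcal{RC}_m(S)}$ realizes $S$ as a subdirect product of the $m$-primitive semirings $\{S/\sigma_\rho\}_{\rho\in\mathcal{RC}_m(S)}$. (The index set is nonempty whenever $S$ is nontrivial, since $\Delta_S\neq\nabla_S$ rules out $\mathcal{RC}_m(S)=\emptyset$; the one-element semiring is a subdirect product of the empty family.)

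For the converse, suppose $S$ is a subdirect product of $m$-primitive semirings $\{S_\alpha\}_{\alpha\in\Delta}$, and choose congruences $\{\rho_\alpha\}_{\alpha\in\Delta}$ on $S$ with $S/\rho_\alpha\simeq S_\alpha$ and $\bigcap_{\alpha\in\Delta}\rho_\alpha=\Delta_S$. Since $m$-primitivity is invariant under isomorphism, each $\rho_\alpha$ is an $m$-primitive congruence, so by Lemma \ref{primitive congruence iff} (the implication $(i)\Rightarrow(ii)$) there is a minimal $S$-semimodule $M_\alpha$ with $\rho_\alpha=ann_S(M_\alpha)$. Then
\[
rad_m(S)=\bigcap_{M\in\mathcal{M}(S)}ann_S(M)\subseteq\bigcap_{\alpha\in\Delta}ann_S(M_\alpha)=\bigcap_{\alpha\in\Delta}\rho_\alpha=\Delta_S,
\]
whence $rad_m(S)=\Delta_S$, i.e., $S$ is $m$-semisimple.

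I do not expect any genuine obstacle: the theorem is essentially a repackaging of Lemma \ref{primitive congruence iff} through the subdirect-product dictionary. The only points needing a little care are set-theoretic bookkeeping — using $\mathcal{RC}_m(S)$ as the index set in the forward direction so that the resulting decomposition is indexed by a set rather than a class — and verifying that ``$m$-primitive'' passes through the isomorphisms $S/\rho_\alpha\simeq S_\alpha$ in the converse.
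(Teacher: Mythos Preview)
Your proof is correct and follows essentially the same route as the paper: both directions hinge on Lemma \ref{primitive congruence iff} together with the standard dictionary between subdirect decompositions and families of congruences intersecting to $\Delta_S$. The only cosmetic difference is that in the forward direction you index by $\mathcal{RC}_m(S)$ and use the characterization $(\rho:\nabla_S)$, whereas the paper indexes by $\mathcal{M}(S)$ and uses $ann_S(M)$; your choice has the small advantage of sidestepping the set/class issue you flag, and in the converse you cite Lemma \ref{primitive congruence iff} directly where the paper unpacks the same pullback of a faithful minimal $S_i$-semimodule by hand.
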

\begin{proof}
We prove the result for $m$-semisimple semirings. Proof for $s$-semisimple semirings is similar.

First, assume that $S$ is a $m$-semisimple semiring. Then $rad_m(S) = \cap_{M \in
\mathcal{M}(S)} ann_S(M) = \Delta_S$. Hence $S$ is a subdirect product of the family $\{ S/ann_S(M) \mid M \in \mathcal{M}(S) \}$ of semirings. Lemma \ref{primitive congruence iff} implies that $ann_S(M)$ is an $m$-primitive congruence on $S$ for every minimal $S$-semimodule $M$. Therefore every semiring in the family $\{ S/ann_S(M) \mid M \in \mathcal{M}(S) \}$ is an $m$-primitive semiring, and so $S$ is a subdirect product of m-primitive semirings.

Conversely, let $S$ be a subdirect product of a family of $m$-primitive semirings $\{S_i \mid \textrm{for all} \; i \in \Lambda\}$. Then there exists a one-to-one homomorphism $\phi : S \rightarrow \Pi_{i \in \Lambda} S_i$ such that the mapping $\pi_i \circ \phi : S \longrightarrow S_i$ is onto for all $i \in \Lambda$. Thus $S/ker (\pi_i \circ \phi) \cong S_i$ for all $i \in \Lambda$. Let $M_i$ be a faithful minimal $S_i$-semimodule for each $i \in \Lambda$. Then, by the Lemma \ref{Lemma 2.1}, $M_i$ is a minimal $S$-semimodule where $ms=m\pi_i \circ \phi(s)$ for all $s \in S$ and $m \in M_i$. Hence $\cap_{M \in \mathcal{M}(S)} ann_S(M) \subseteq \cap_{i \in \Lambda} ann_S(M_i)$. Now $(a, b) \in ann_S(M_i)$ implies that $m \pi_i \circ \phi(a) =m \pi_i \circ \phi(b)$ for all $m \in M_i$; and so $(\pi_i \circ \phi(a), \pi_i \circ \phi(b)) \in ann_{S_i}(M)$. Since $M_i$ is faithful over $S_i$, it follows that $\pi_i \circ \phi(a)=\pi_i \circ \phi(b)$. Hence $\cap_{i \in \Lambda} ann_S(M_i)=\Delta_S$ which implies that $rad_m(S) = \cap_{M \in \mathcal{M}(S)} ann_S(M) =\Delta_S$. Thus $S$ is a $m$-semisimple semiring.
\end{proof}

Now, taken together the structure of an s-semisimple semiring characterized in Theorem \ref{structure of semisimple semirings} and the characterization of the commutative s-primitive semirings in Corollary \ref{commutative s-primitive semirings} turn out to be an characterization of the commutative s-semisimple semirings.
\begin{corollary}
Let $S$ be a commutative semiring. Then $S$ is an $s$-semisimple semiring if and only if it is a subdirect product of a family of semirings that are either the 2-element Boolean algebra $\mathbb{B}$ or fields.
\end{corollary}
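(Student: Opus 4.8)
The plan is to derive the corollary by splicing together Theorem~\ref{structure of semisimple semirings} with Corollary~\ref{commutative s-primitive semirings}, the only additional observation needed being that homomorphic images (hence subdirect factors) of a commutative semiring are again commutative.

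First I would handle the forward direction. Assume $S$ is a commutative $s$-semisimple semiring. By Theorem~\ref{structure of semisimple semirings} (and inspecting its proof), $S$ is a subdirect product of the family $\{S/ann_S(M) \mid M \in \mathcal{S}(S)\}$, and each quotient $S/ann_S(M)$ is an $s$-primitive semiring by Lemma~\ref{primitive congruence iff}. Since $S$ is commutative, every such quotient is commutative as well, so each factor is a \emph{commutative} $s$-primitive semiring. Corollary~\ref{commutative s-primitive semirings} then forces each factor to be either the $2$-element Boolean algebra $\mathbb{B}$ or a field, which is exactly the desired decomposition.

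For the converse, suppose $S$ is a subdirect product of a family $\{S_\alpha\}$ in which each $S_\alpha$ is either $\mathbb{B}$ or a field. By Corollary~\ref{commutative s-primitive semirings}, each $S_\alpha$ is $s$-primitive, so $S$ is a subdirect product of $s$-primitive semirings, and Theorem~\ref{structure of semisimple semirings} immediately gives that $S$ is $s$-semisimple. (Here $S$ is automatically commutative, being isomorphic to a subsemiring of a product of commutative semirings, so the commutativity hypothesis is consistent with, rather than an extra restriction on, this direction.)

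The argument is short, and I do not expect a genuine obstacle; the one point deserving a moment of care is in the forward direction, namely confirming that the $s$-primitive factors supplied by Theorem~\ref{structure of semisimple semirings} are truly commutative. This is immediate once one notes that those factors are precisely the quotients $S/ann_S(M)$ and that every quotient of a commutative semiring is commutative. Everything else is bookkeeping between the two cited results.
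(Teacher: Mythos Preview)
Your proposal is correct and follows exactly the approach the paper indicates: combine Theorem~\ref{structure of semisimple semirings} with Corollary~\ref{commutative s-primitive semirings}, noting that the subdirect factors $S/ann_S(M)$ inherit commutativity from $S$. The paper does not spell out the details beyond this, so your write-up is in fact a fuller version of the same argument.
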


Mischell and Fenoglio \cite{MF} and Basir et al. \cite{BHJK} independently proved that a commutative semiring $S$ with $|S| \geqslant 2$ is congruence-simple if and only if it is either a field or the 2-element Boolean algebra $\mathbb{B}$. Hence it follows that a commutative semiring is s-semisimple if and only if it is a subdirect product of congruence-simple commutative semirings. A semiring homomorphism $f : S_1 \longrightarrow S_2$ is said to be \emph{semiisomorphism} if, for every $a \in S_1$, we have $f(a)=0$ only for $a=0$. Katsov and Nam \cite{KN2014} proved that a commutative semiring $S$ is Brown-McCoy semisimple if and only if $S$ is semi-isomorphic to a subdirect product of a family of semirings that are either the 2-element Boolean algebra $\mathbb{B}$ or fields. Hence every commutative s-semisimple semiring is Brown-McCoy semisimple in the sense of Katsov and Nam.
\begin{example}
Consider the semiring $\mathbb{N}$ of all nonnegative integers. Then for every prime $p$, the Bourne congruence $\sigma_{p\mathbb{N}}$ is a maximal regular congruence on $\mathbb{N}$ with $[0]_{\sigma_{p\mathbb{N}}}=p\mathbb{N}$. If $J$ is a $\sigma_{p\mathbb{N}}$-saturated ideal in $\mathbb{N}$ with $p\mathbb{N} \varsubsetneq J$, then there exists $a \in J$ such that $0 < a < p$. By the Fermat's little theorem, we have $a^{p-1} \equiv 1 (mod p)$ which implies that $1 \in J$ and so $J = \mathbb{N}$. Thus $p\mathbb{N} = [0]_{\sigma_{p\mathbb{N}}}$ is a maximal $\sigma_{p\mathbb{N}}$-saturated ideal in $\mathbb{N}$ and it follows that $\sigma_{p\mathbb{N}} \in \mathcal{RC}_s(\mathbb{N})$. Hence $rad_s(\mathbb{N}) \subseteq \cap \sigma_{p\mathbb{N}} = \Delta_{\mathbb{N}}$; and so $\mathbb{N}$ is an $s$-semisimple semiring.

Also $\cap \sigma_{p\mathbb{N}} = \Delta_{\mathbb{N}}$ implies that $\mathbb{N}$ is a subdirect product of the family of fields $\mathbb{N}_p = \mathbb{N}/\sigma_{p\mathbb{N}}$ where $p$ is a prime.
\end{example}

We conclude this section with a representation of s-primitive semirings as a semiring of endomorphisms on a semimodule over a division semiring.

The opposite semiring $S^{op}$ of a semiring $(S, +, \cdot)$ is defined by $(S, +, \circ)$ where $a \circ b = b \cdot a$ for all $a, b \in S$. Hence a semiring $S$ is a division semiring if and only if the opposite semiring $S^{op}$ is so.

Let $M$ be a semimodule over a division semiring $D$. Then a subsemiring $T$ of the endomorphism semiring $End_D(M)$ is called \emph{1-fold transitive} if for every non-zero $m \in M$ and $n \in M$ there exists $\alpha \in T$ such that $\alpha(m) = n$.

In the context of semirings, Schur's lemma was proved in \cite{IRS2011}, which states that if $M$ is a simple $S$-semimodule, then the endomorphism semiring $End_S(M)$ is a division semiring.

Let $M$ be a right $S$-semimodule and $E=End_S(M)$. Then for $D=E^{op}$, $M$ is a right semimodule over $D$ where the scalar multiplication is defined by $m \cdot \alpha = \alpha(m)$ for all $m \in M$ and $\alpha \in D$.
\begin{theorem}                       \label{s-primitive as 1-fold transitive}
If $S$ is a right $s$-primitive semiring, then $S^{op}$ is isomorphic to a 1-fold transitive subsemiring of the semiring $End_D(M)$ of all endomorphisms on a semimodule $M$ over a division semiring $D$.
\end{theorem}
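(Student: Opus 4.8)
\emph{Proof proposal.} The plan is to let $S^{op}$ act on the faithful simple semimodule supplied by $s$-primitivity, and to check that this action lands inside, and is $1$-fold transitive on, the endomorphism semiring over the relevant division semiring. First I would use that $S$ is right $s$-primitive to fix a faithful simple right $S$-semimodule $M$, so that $ann_S(M)=\Delta_S$. By Schur's lemma for semimodules (\cite{IRS2011}), $E=End_S(M)$ is a division semiring; hence $D=E^{op}$ is a division semiring, and, exactly as recalled just before the statement, $M$ is a right $D$-semimodule via $m\cdot\alpha=\alpha(m)$.

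Next I would consider the assignment $\psi\colon S\longrightarrow End_D(M)$, $s\mapsto\psi_s$, where $\psi_s(m)=ms$ is the endomorphism attached to $s$ in Section~2. Three things have to be verified. (a) Each $\psi_s$ is a $D$-semimodule endomorphism of $M$: for $\alpha\in D=E^{op}$ one needs $\psi_s(m\cdot\alpha)=\psi_s(m)\cdot\alpha$, i.e. $\alpha(m)s=\alpha(ms)$, which holds precisely because $\alpha$ is an $S$-endomorphism of $M$. (b) $\psi$ is additive with $\psi_0=0$, which is immediate from the semimodule axioms. (c) $\psi_{st}(m)=m(st)=(ms)t=\psi_t(\psi_s(m))$, so $\psi$ reverses the order of multiplication and therefore defines a semiring homomorphism $S^{op}\longrightarrow End_D(M)$. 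Finally $\ker\psi=\{(s,t)\mid \psi_s=\psi_t\}=ann_S(M)=\Delta_S$ by faithfulness, so $\psi$ is injective and $S^{op}\simeq T:=\psi(S^{op})$, a subsemiring of $End_D(M)$.

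It then remains to prove that $T$ is $1$-fold transitive. Since $M$ is simple it is in particular minimal, so Lemma~\ref{minimal} gives $mS=M$ for every nonzero $m\in M$. Hence, given a nonzero $m\in M$ and an arbitrary $n\in M$, there is $s\in S$ with $ms=n$, and then $\psi_s\in T$ satisfies $\psi_s(m)=n$. This is exactly $1$-fold transitivity, and the theorem follows.

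I expect the only delicate point to be bookkeeping with the opposite-semiring and left/right conventions: checking that $M$ is genuinely a \emph{right} $D$-semimodule, that the $D$-linearity of $\psi_s$ is literally the $S$-endomorphism property of the scalars, and that $\psi$ is a homomorphism out of $S^{op}$ (not $S$) because the $S$-scalars act on the right while composition in $End_D(M)$ is read right to left. There is no Jacobson-density obstacle here, since only $1$-fold transitivity is asserted and that comes for free from minimality of $M$; obtaining the $n$-fold transitive (``dense'') version would instead require the full congruence-simplicity of $M$ together with a $D$-linear independence argument, which is not needed for this statement.
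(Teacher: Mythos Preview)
Your proposal is correct and follows essentially the same route as the paper's proof: fix a faithful simple $M$, invoke Schur's lemma to get $D=End_S(M)^{op}$, define $\psi_s(m)=ms$, verify $D$-linearity via the $S$-endomorphism property of scalars, use faithfulness for injectivity, and use $mS=M$ from Lemma~\ref{minimal} for $1$-fold transitivity. If anything, you are slightly more explicit than the paper in spelling out why $\psi$ lands in a homomorphism out of $S^{op}$ rather than $S$.
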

\begin{proof}
Let $M$ be a faithful simple right $S$-semimodule. By Schur's Lemma for semimodules \cite{IRS2011}, the semiring $E = End_S(M)$ is a division semiring. Hence $D= E^{op}$ is a division semiring, and so $M$ as a right $D$-semimodule where $m \cdot \alpha \mapsto \alpha(m)$.

For every $a \in S$, define a mapping $\psi_a : M \rightarrow M$ by $\psi_a (m) =ma$.
Then for every $\alpha \in D$, we have $\psi_a(m.\alpha) = \psi_a (\alpha(m))= \alpha(m)a = \alpha(ma) = (ma).\alpha = \psi_a(m). \alpha$. In fact, $\psi_a$ is an endomorphism on $M$ considered a $D$-semimodule.

Also the mapping $\psi : S^{op} \rightarrow End_D(M)$ defined by $\psi(a) = \psi_a$ is a semiring homomorphism. Moreover $ker \ \psi = ann_S(M) = \Delta_S$ implies that $\psi$ is an injective homomorphism; and so $S^{op}$ is isomorphic to the subsemiring $T = \{ \psi_a \mid a \in S \}$ of $End_D(M)$.

Since $M$ is a simple right $S$-semimodule, by Lemma \ref{minimal}, for every $m(\neq 0) \in M$, $mS = M$.
Then for every $n \in M$ there exists $a \in S$ such that $ma = n$ and so $\psi_a(m) = n$. Thus $T$ is a 1-fold transitive subsemiring of $End_D(M)$.
\end{proof}

It follows from Corollary \ref{commutative s-primitive semirings} that the semifield $F = \mathbb{R}_{max}$ is not an $s$-primitive semiring. Since $F$ contains $1$, every $F$-endomorphism on $F$ is of the form $\psi_a : F \rightarrow F$ given by $\psi_a(m) = am$. Hence $F \simeq End_F(F)$ which implies that $End_F(F)$ is not $s$-primitive; whereas $End_F(F)$ is a 1-fold transitive subsemiring of itself. Thus the converse of the Theorem \ref{s-primitive as 1-fold transitive} does not hold. However, the converse holds in the following weaker form.
\begin{theorem}
Let $D$ be a division semiring and $M$ be a right $D$-semimodule. If $T$ is a 1-fold transitive subsemiring of $End_D(M)$, then $T^{op}$ is a right $m$-primitive semiring.
\end{theorem}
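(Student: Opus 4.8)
The plan is to equip $M$ with a right $T^{op}$-semimodule structure in the only reasonable way and then check that this semimodule is both faithful and minimal; producing a faithful minimal $T^{op}$-semimodule is exactly what is required for $T^{op}$ to be $m$-primitive. Throughout I would take $M \neq 0$, which carries the substantive content of the hypothesis — if $M = 0$ then $T = \{0\}$ and there is nothing to prove. First I would define the action $M \times T^{op} \longrightarrow M$ by $m \cdot \alpha = \alpha(m)$, exactly as in the passage from $End_S(M)$ to an $E^{op}$-semimodule recalled just before Theorem \ref{s-primitive as 1-fold transitive}. Since every $\alpha \in T \subseteq End_D(M)$ is in particular an additive endomorphism of $(M,+)$ fixing $0_M$, the distributivity and zero axioms for a right semimodule are immediate, and for $\alpha, \beta \in T^{op}$ one has $(m\cdot\alpha)\cdot\beta = \beta(\alpha(m)) = (\beta\alpha)(m) = m\cdot(\alpha\circ\beta)$, where $\alpha\circ\beta = \beta\alpha$ denotes the product in $T^{op}$; hence $M$ is a right $T^{op}$-semimodule. (Note that neither the division-semiring structure of $D$ nor the $D$-linearity of the endomorphisms in $T$ enters anywhere — these are hypotheses that this weak converse does not need.)

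Next I would verify faithfulness and minimality. For faithfulness: $ann_{T^{op}}(M) = \{(\alpha,\beta) \in T^{op} \times T^{op} \mid \alpha(m) = \beta(m)\ \text{for all}\ m \in M\}$, and two endomorphisms agreeing at every point of $M$ are equal as elements of $End_D(M)$, so $ann_{T^{op}}(M) = \Delta_{T^{op}}$. For minimality I would use $1$-fold transitivity: fix any nonzero $m \in M$; given $n \in M$, $1$-fold transitivity supplies $\alpha \in T$ with $\alpha(m) = n$, i.e. $m \cdot \alpha = n$, so $m\,T^{op} = M$. Thus $M = m\,T^{op}$ for every nonzero $m \in M$ while $M \neq 0$, and Lemma \ref{minimal} gives that $M$ is minimal (in particular $M\,T^{op} \neq 0$, since $m \in m\,T^{op} \subseteq M\,T^{op}$). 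Hence $M$ is a faithful minimal right $T^{op}$-semimodule, and therefore $T^{op}$ is (right) $m$-primitive.

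The argument is essentially a formal verification, so I do not expect a genuine obstacle; the only points needing care are the bookkeeping of the opposite product in the associativity axiom (so that one genuinely lands on $\alpha\circ\beta$ and not on $\beta\alpha$) and the exclusion of the degenerate case $M = 0$. I would also add a remark, parallel to the $\mathbb{R}_{max}$ example preceding the statement, explaining why "minimal" cannot be strengthened to "simple": the congruence lattice of $M$ regarded as a $T^{op}$-semimodule need not be trivial, which is exactly why only the $m$-primitive form of the converse of Theorem \ref{s-primitive as 1-fold transitive} is available.
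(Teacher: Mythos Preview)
Your proof is correct and follows essentially the same route as the paper: define the right $T^{op}$-action on $M$ by $m\cdot\alpha=\alpha(m)$, use $1$-fold transitivity together with Lemma~\ref{minimal} to get minimality, and compute $ann_{T^{op}}(M)=\Delta_{T^{op}}$ for faithfulness. Your write-up is in fact more careful than the paper's (you spell out the semimodule axioms, handle the degenerate case $M=0$, and note that the $D$-linearity is never used), but the underlying argument is the same.
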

\begin{proof}
Define $M \times T^{op} \rightarrow M$ by $m.\alpha \mapsto \alpha(m)$. Then $M$ is a right $T^{op}$-semimodule. Let $m$ be a non-zero element in $M$. Then for every $n \in M$, there exists $\alpha \in T$ such that $m.\alpha = n$. Therefore $mT^{op} = M$ which implies that $M$ is minimal, by Lemma \ref{minimal}.
Now \begin{align*}ann_{T^{op}}(M) &= \{ (\alpha, \beta) \in T \times T \mid m.\alpha = m.\beta \mbox{ for all } m \in M \}\\
&= \{ (\alpha, \beta) \in T \times T \mid \alpha(m) = \beta(m) \mbox{ for all } m \in M \}\\
&= \{ (\alpha, \beta) \in T \times T \mid \alpha = \beta \}\\
&= \Delta_S
\end{align*} and so $M$ is a faithful minimal $T^{op}$-semimodule. Therefore $T^{op}$ is a $m$-primitive semiring.
\end{proof}

\section{Remarks and open questions}
In Section 3, we introduced the m-radical and s-radical of a semiring based on the two notions of `simplicity' of a semimodule, namely minimal semimodule and simple semimodule. Similarly, the e-radical of a semiring can also be defined based on the class of congruence simple semimodules, which are known as elementary semimodules \cite{Chen}, \cite{IRS2011}. We conjecture that an $S$-semimodule $M$ is elementary if and only if there exists a maximal regular right congruence $\mu$ on $S$ such that $S/\mu$ is isomorphic to $M$. Once this conjecture is proved, it would be possible to characterize the e-radical of a semiring internally. Similarly to the present work, every e-semisimple semiring can be expressed as a subdirect product of e-primitive semirings. Thus the present work can be extended to characterize the e-radical of a semiring and e-semisimple semirings.

\bibliographystyle{amsplain}

\end{document}